\documentclass[a4paper]{amsart}

\usepackage{amsmath, amssymb, amsthm, color, multicol, mathabx, verbatim}


\newcommand{\SL}[2][\mathbb{Z}]{\mathrm{SL}_{#2}(#1)}
\newcommand{\R}{\mathbb{R}}
\newcommand{\Z}{\mathbb{Z}}
\newcommand{\N}{\mathbb{N}}

\newcommand{\Supp}[1]{\mathrm{Supp}(#1)}

\newcommand{\PalWidth}{\mathrm{PW}}
\newcommand{\PW}{\mathrm{PW}}
\newcommand{\x}{\mathbf{x}}

\newcommand{\uvec}{\mathbf{u}}
\newcommand{\evec}{\mathbf{e}}
\newcommand{\vvec}{\mathbf{v}}

\newtheorem{thm}{Theorem}[section]
\newtheorem{cor}[thm]{Corollary}
\newtheorem{lemma}[thm]{Lemma}

\newtheorem{question}[thm]{Question}

\newtheorem{rem}[thm]{Remark}

\setlength{\parindent}{0pt}
\setlength{\parskip}{7pt}

\title[Palindromic width of wreath products and solvable groups]{Palindromic width of wreath products, metabelian groups,  and  max-n solvable groups}
\author{T.{} R.{} Riley and A.{} W.{} Sale}

\thanks{The first author gratefully acknowledges partial  support from NSF grant DMS--1101651.}

\begin{document}

\begin{abstract}
A group has finite palindromic width if there exists  $n$ such that every element can be expressed as a product of $n$ or fewer palindromic words. We show that if $G$ has finite palindromic width with respect to some generating set, then so does $G \wr \Z^{r}$.   We also  give a new, self-contained, proof that finitely generated metabelian groups have finite palindromic width. Finally, we  show that solvable groups satisfying the maximal condition on normal subgroups (max-n) have finite palindromic width.

 \medskip

\footnotesize{\noindent \textbf{2010 Mathematics Subject
Classification:  20F16, 20F65}  \\ \noindent \emph{Key words and phrases:} palindrome, metabelian group, solvable group, wreath product}
\end{abstract}

\maketitle

\section{Introduction}

The \emph{width} of a group $G$ with respect to an (often infinite) generating set $A$ is the minimal $n$ such that every $g\in G$ can expressed as the product of $n$ or fewer elements from $A$. If no such $n$ exists,   the width is infinite.
Examples include the primitive width of free groups (e.g.\  \cite{BST05}), and the   commutator width of a derived subgroup, or more generally the verbal width of a verbal subgroup with respect to any given word (\cite{Segal09} is a survey).

This paper concerns \emph{palindromic width}.    Suppose $G$ is a group with  generating set $X$.
Write $\PalWidth(G, X)$ for the width of $G$ with respect to the set of palindromic words on ${X\cup X^{-1}}$ --- the words that read the same forwards as  backwards.

We give  bounds on palindromic width in a variety of settings. Here is the first.  (We view $G$ and the $\Z^r$-factor as subgroups of $G \wr \Z^r$ in the standard way.)

\begin{thm} \label{wr thm}
If $G$ is a group with finite generating set $A$, then 
$$\PalWidth(G \wr \Z^r, A \cup S) \ \leq \  3r + \PalWidth(G,A)$$
where S is the standard generating set of $\Z^r$. Better, when $r=1$,
$$\PalWidth(G \wr \Z , A \cup \{t\}) \ \leq \ 2 + \PalWidth(G,A)$$
where $t$ is a generator of $\Z$.
\end{thm}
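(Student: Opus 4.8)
The plan is to work in the wreath product $W = G \wr \Z$ with base group $B = \bigoplus_{j \in \Z} G$ and to exploit the standard normal form: every element is $(f, n)$ with $f \in B$ of finite support and $n \in \Z$, and a single copy of $g \in G$ in the $j$-th coordinate of $B$ is the conjugate $t^{j} g t^{-j}$. Thus a base element is a product $\prod_{j} t^{j} g_{j} t^{-j}$ over its support, which I picture as a row of ``lamps'' $g_j \in G$ together with a cursor that the letter $t$ advances. I would first record the palindromes to be used as building blocks: each power $t^{k}$ is a palindrome; and for any word $u$ on $A \cup A^{-1} \cup \{t, t^{-1}\}$ and any palindrome $c$, the word $u\,c\,\mathrm{rev}(u)$ is a palindrome, where $\mathrm{rev}$ reverses a word letter-by-letter. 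The crucial structural observation is the effect of such a palindrome on the lamps: reading $u$ deposits a word $w$ at some site $x$, and reading $\mathrm{rev}(u)$ deposits the reversed word $\mathrm{rev}(w)$ at the mirror-image site, the mirror taken about half the total $t$-exponent of the palindrome. Hence a single palindrome always produces a lamp configuration that is symmetric under a reflection of $\Z$, up to replacing each lamp by the group element named by the reversed word.

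For the sharp $r = 1$ bound, write $\PalWidth(G,A) = p$. Each lamp value $g_j \in G$ factors as a product of $p$ palindromic words on $A \cup A^{-1}$; since a palindromic word is its own reverse, I can run the $u\,c\,\mathrm{rev}(u)$ construction as an out-and-back sweep (move left with $t^{-m}$, sweep right depositing the $\ell$-th palindromic factor of each lamp, return with a final $t^{-m}$, so the net shift is $0$) to realise, in a single palindrome, the $\ell$-th factors placed \emph{symmetrically} about the origin, where site $0$ is its own mirror. Multiplying these for $\ell = 1, \dots, p$ builds any \emph{symmetric} configuration ($f(a)=f(-a)$) in $p$ palindromes, which simultaneously fixes the origin lamp. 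It then remains to break the left--right symmetry and to install the correct translation: I would use one further palindrome carrying a nonzero net shift (so that its reflection centre is displaced from the origin and the reversed-word entries it deposits land clear of the region being corrected) to convert the symmetric configuration into the target asymmetric one, and one power of $t$ to fix the residual translation. This yields $p + 2$.

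The step I expect to be the main obstacle is precisely this symmetry-breaking. A single palindrome is forced to act symmetrically, with its entries reversed as words, so realising genuinely different values on the two sides of the origin \emph{and} landing the cursor at $n$, without spending more than two palindromes beyond the symmetric cost, is delicate. The care needed is (i) to choose the displaced reflection centre large enough that the ``mirror junk'' deposited by the correcting palindrome falls outside the support already set, so that it can be absorbed rather than requiring cleanup; and (ii) to track the order in which lamp entries at a common site multiply as the palindromes are concatenated, since $G$ is nonabelian and the reversed words enter in the opposite factor. Checking that this bookkeeping closes up with exactly the claimed number of palindromes — rather than, say, doubling the cost to $2p$ by treating the two sides independently — is the heart of the argument.

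For general $r$ I would iterate the one-dimensional sweep one coordinate direction at a time. Using the standard generators $S = \{s_1, \dots, s_r\}$, for each direction $s_i$ I would apply an out-and-back sweep to flatten the configuration's extent in that coordinate together with a shift to reposition, at a cost of three palindromes per direction; the loss of the sharp constant $2$ comes from having to manage translations in several independent directions at once rather than folding the shift-correction into a single sweep as in the $r=1$ case. After all $r$ directions are processed the configuration is supported on the single $G$-fibre over the origin, which costs a further $\PalWidth(G,A)$ palindromes, giving the stated bound $3r + \PalWidth(G,A)$.
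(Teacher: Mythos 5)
Your reading of which group elements a single palindrome can represent is correct (the deposited lamp configuration is word-reversal-symmetric about half the net $t$-shift), and your $p$ out-and-back sweeps realising a \emph{genuinely} symmetric configuration $\sigma(x)=\sigma(-x)$ in $p$ palindromes are sound --- setting the origin lamp inside those sweeps is even a neat way to avoid a separate correction at $\mathbf{0}$. But the gap sits exactly where you predicted it, and the mechanism you propose there fails. A correcting palindrome whose reflection centre is displaced far from the origin, so that its mirrored deposits ``land clear of the region being corrected,'' does not make those deposits harmless: they are lamp values of the final group element, so landing on fresh sites they corrupt the configuration, and nothing can absorb them --- the only factor remaining is a power of $t$, which deposits nothing. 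If instead you try to pre-cancel the mirror junk using the symmetric part $\sigma$, the genuine symmetry of $\sigma$ about $0$ forces equal nontrivial values at the opposite sites, which the corrector must then cancel at \emph{their} mirrors, and the support cascades off to infinity: with the two reflection centres far apart, the constraint system $f=\sigma\tau$ has no finitely supported solution. (Killing the junk outright would require representing $\sigma(x)^{-1}f(x)$ by a word whose \emph{reverse} is trivial in $G$, which is not possible for general elements.)

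What is missing is a decomposition lemma, and it dictates the opposite of your choice: the corrector's centre must be \emph{adjacent} to the symmetric centre --- at $\frac{1}{2}$, net shift $1$ --- so that the two reflections generate an infinite dihedral action whose orbits sweep all of $\Z$, and $\sigma$ and the corrector can then be constructed \emph{simultaneously}, by recursion from outside the support inwards, alternately bouncing between the two symmetry constraints and the equation $f=\sigma\tau$. This is precisely the content of Lemma~\ref{lem:r-dim symmetric functions} in the paper (there phrased with word-reversal symmetry for both factors, leaving a single-point defect $\Delta_\gamma$, one palindromic factor of which is absorbed into the centre value to recover $\PW(G,A)+2$); with that lemma supplied, your count $p+2$ does close up. For general $r$, your plan of ``flattening one coordinate at a time'' is likewise unsubstantiated as stated: a palindrome cannot transport lamp values onto a slab. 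What the paper's induction actually does is leave, after handling the $r$-th coordinate, a residual defect supported on a codimension-one slab, to which the $(r-1)$-dimensional decomposition applies; the resulting pieces $f_0,f_1,\ldots,f_r$ are then realised by one snake-path palindrome $u^{f_0}$ and $r$ two-palindrome words $v^{f_i}$, with one absorption into the closing translation $x_r^{t_r}\ldots x_1^{t_1}$, giving $3r+\PW(G,A)$.
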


The upper bound of our next theorem is a corollary.

\begin{thm}\label{cor:ZwrZ}
The palindromic width of  $$\Z \wr \Z \ = \  \left\langle  \, a, t \, \left| \,   \left[a,a^{t^k}\right]=1 \ (k \in \Z)  \,  \right. \right\rangle$$  with respect to $a,t$ is  $3$.
\end{thm}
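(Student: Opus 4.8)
The plan is to obtain the upper bound $\le 3$ essentially for free from Theorem~\ref{wr thm}, and to spend all the effort on the lower bound $\ge 3$. For the upper bound, apply Theorem~\ref{wr thm} with $G = \Z = \langle a \rangle$ and $r = 1$. Every element $a^m$ of $\Z$ is represented by the palindrome $a^{|m|}$ (or $a^{-|m|}$), so $\PW(\Z,\{a\}) = 1$, and the theorem gives $\PW(\Z \wr \Z, \{a,t\}) \le 2 + 1 = 3$.

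For the lower bound I would first install convenient coordinates. Identify $\Z \wr \Z$ with the set of pairs $(p,n)$, where $p \in \Z[x,x^{-1}]$ records the base-group element (the exponent in the $k$-th copy of $\Z$ being the coefficient of $x^k$) and $n \in \Z$, with product $(p,m)(q,n) = (p + x^m q,\, m+n)$; here $a = (1,0)$ and $t = (0,1)$. Reading a word $s_1 \cdots s_\ell$ from left to right, each letter $a^{\pm 1}$ contributes $\pm x^{\sigma}$ to $p$, where $\sigma$ is the net number of $t$'s to its left, and the final $n$ is the total exponent sum of $t$.

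The crux is a symmetry property of palindromes, which I would prove by the bookkeeping just described. If a palindrome represents $(p,n)$, then reflecting the word about its midpoint carries the $a$-letter at running $t$-height $\sigma$ to a matching $a$-letter at height $n - \sigma$ (using that $t$-letters contribute $0$ to $p$, so the height transforms cleanly); summing the contributions yields $p(x) = x^n p(1/x)$. Consequently, if $(q,0) = (p_1,n)(p_2,-n)$ is a product of two palindromes, then $q = p_1 + x^n p_2$, and since both $p_1$ and $x^n p_2$ satisfy $s(x) = x^n s(1/x)$, so does $q$. The cases of zero or one palindrome are subsumed (take $p_2 = 0$). Hence any base-group element expressible as a product of at most two palindromes has the form $(q,0)$ with $q(x) = x^n q(1/x)$ for some $n \in \Z$.

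It then remains to exhibit one $q$ failing this condition for every $n$. I would take $q = 1 + 2x$, realized by $a\, t\, a^2\, t^{-1} = (1 + 2x, 0)$: its two nonzero coefficients $1$ and $2$ are distinct, so a reflection $x^k \mapsto x^{n-k}$ preserving the coefficient pattern would have to fix the exponents $0$ and $1$ individually, forcing $n = 0$ and $n = 2$ at once --- impossible. Thus $(1+2x,0)$ requires at least three palindromes, and with the upper bound the width is exactly $3$. The main obstacle is the palindrome symmetry lemma: one must track precisely how the running $t$-exponent behaves under word reversal to arrive at the identity $p(x) = x^n p(1/x)$; everything afterward is a short calculation.
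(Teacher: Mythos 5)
Your proof is correct, and while the upper bound is obtained exactly as in the paper (Theorem~\ref{wr thm} with $r=1$ and $\PW(\Z,\{a\})=1$), your lower bound takes a genuinely different and slicker route. Both arguments rest on the same key fact, which you prove correctly by tracking the running $t$-exponent under reversal: a palindrome representing $(p,n)$ satisfies $p(x)=x^n p(1/x)$, i.e.\ the lamp configuration is symmetric about $n/2$. The difference is the choice of witness. The paper takes $(f,3)$ with $\Supp{f}=\{0,1\}$ and $f(0)\neq f(1)$; there the two palindromic factors have $t$-exponents $p$ and $q$ with $p+q=3$, so their symmetry centres $\frac{1}{2}p$ and $p+\frac{1}{2}q$ are offset by $\frac{3}{2}$, and the paper must work to extract a contradiction: it derives the ``bounce'' identities $g(x)=g(x+3)$ and $h_0(x)=h_0(x-3)$ away from finitely many exceptional points, then runs a case analysis on the sign of $p$ to show one factor would repeat a length-$3$ pattern and hence be non-symmetric or infinitely supported. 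You instead choose a witness in the base group, $(1+2x,0)=ata^2t^{-1}$: the two $t$-exponents must then be $n$ and $-n$, so after the translation by $x^n$ both summands $p_1$ and $x^np_2$ are symmetric about the \emph{same} centre $n/2$, hence so is $q=p_1+x^np_2$ --- and $1+2x$, having distinct nonzero coefficients at $0$ and $1$, is symmetric about no centre (matching them forces $n=0$ and $n=2$ at once). Your alignment trick eliminates the entire periodicity and case analysis, giving a few-line lower bound; the paper's argument is longer but yields the slightly stronger information that even an element with nonzero cursor position, namely translation $3$, fails to be a product of two palindromes. All the computations in your sketch (the multiplication rule $(p,m)(q,n)=(p+x^mq,\,m+n)$, the reflection $\sigma\mapsto n-\sigma$ of heights of mirror-paired $a$-letters, and the verification that $x^np_2$ inherits the symmetry $s(x)=x^ns(1/x)$ from $p_2(x)=x^{-n}p_2(1/x)$) check out, so the proposal is a complete and correct proof of the theorem.
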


The heart of our proof of Theorem~\ref{wr thm} is a result (Lemma~\ref{lem:r-dim symmetric functions})  on expressing  finitely supported functions from $\Z^r $  to a group as a pointwise product of two such functions both exhibiting certain symmetry.  We develop this result  (in Section~\ref{skew-symmetric functions section})  to more elaborate results on expressing finitely supported functions from $\Z^r$ to a ring as the sum of what we call \emph{skew-symmetric} finitely supported functions.   This led us to a new proof of the following theorem which we have since discovered was proved by Bardakov \& Gongopadhyay not long prior.

\begin{thm}[Bardakov \& Gongopadhyay~\cite{BG13}]\label{thm:main}
The palindromic width of any metabelian group with respect to any finite generating set is finite.  
\end{thm}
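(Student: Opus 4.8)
The plan is to exploit the module structure of a finitely generated metabelian group together with the decomposition results for skew-symmetric functions developed in Section~\ref{skew-symmetric functions section}, and the fact that palindromes realise centrally symmetric functions.

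\emph{Structure.} Let $G$ be finitely generated metabelian with finite generating set $X$, put $A = G'$ (abelian and normal) and $Q = G/A$ (finitely generated abelian). By a theorem of P.\ Hall, $A$ is a finitely generated module over the Noetherian ring $\Z[Q]$, with $Q$ acting by conjugation. Writing $Q \cong \Z^r \oplus T$ with $T$ finite and folding $T$ into the coefficients, every element of $A$ is recorded by a finitely supported function $f\colon \Z^r \to M$, where $M$ is a fixed finitely generated abelian group (spanned by the module generators over $\Z[T]$), via $a = \prod_{\mathbf{v}} (\mathbf{t}^{\mathbf{v}})\!\cdot\! c_{\mathbf v}$ with $f(\mathbf v)=c_{\mathbf v}$, where $\mathbf t^{\mathbf v}$ denotes conjugation by a lift of $\mathbf v \in \Z^r$. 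Since any $g \in G$ factors as $g = u\,a$ with $u$ a lift of the image of $g$ in $Q$ and $a \in A$, and since $Q$ is abelian (hence of finite palindromic width with respect to the images of $X$), it suffices to bound the number of palindromes needed to produce an arbitrary $a\in A$, that is, an arbitrary finitely supported $f$.

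\emph{Palindromes realise centrally symmetric functions.} I would next record the position-weighting formula: for a word $s_1\cdots s_n$ the $A$-coordinate is $\sum_i P_i\cdot \alpha(s_i)$, where $P_i\in Q$ is the partial image of $s_1\cdots s_{i-1}$ and $\alpha(s_i)$ is the $A$-contribution of the letter $s_i$. For a palindrome the equality $s_i = s_{n+1-i}$ forces the resulting $f$ to be invariant under the point reflection about the centre of the palindrome; conversely a single ``comb'' palindrome of the shape $L\,M\,\tilde L$ (stepping out to each point of the support, depositing the prescribed coefficient, stepping back through $M$ and $\tilde L$) realises an arbitrary function symmetric about a chosen point of $\tfrac12\Z^r$. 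Thus palindromes produce, up to a bounded correction in the $Q$-coordinate, exactly the centrally symmetric (skew-symmetric) functions studied in Section~\ref{skew-symmetric functions section}.

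\emph{Assembly.} Now apply the decomposition lemma of that section (the ring-valued analogue of Lemma~\ref{lem:r-dim symmetric functions}) to write the finitely supported $f\colon \Z^r \to M$ as a sum of $N(r)$ skew-symmetric functions, with $N(r)$ depending only on $r$. Each summand is realised by one palindrome by the previous step, and the discrepancy in the $Q$-coordinate introduced by these palindromes is absorbed by a bounded number of further palindromes coming from $Q$. Collecting the pieces bounds $\PW(G,X)$ by $N(r)+\PW(Q,\bar X)+O(1)$, a finite quantity.

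\emph{Main obstacle.} The crux is combining the comb construction with the decomposition lemma: the genuine difficulty is the \emph{integral} decomposition of $f$ into centrally symmetric pieces, since one cannot simply average --- $\tfrac12\big(f(\mathbf x)+f(-\mathbf x)\big)$ need not be integral --- which is precisely what forces the more delicate skew-symmetric machinery of Section~\ref{skew-symmetric functions section} and fixes the constant $N(r)$. The remaining technical points, namely that the extension $1\to A\to G\to Q\to 1$ need not split, that $Q$ may have torsion, and that each palindrome must be an honest word in $X$ with the correct image in $Q$, are handled by the bookkeeping above and do not affect finiteness.
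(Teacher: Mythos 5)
Your pivotal claim---that a palindrome forces the module-coordinate function $f$ to be \emph{invariant} under the point reflection about its centre, together with your parenthetical equating ``centrally symmetric'' with ``skew-symmetric''---is where the argument breaks. Plain reflection-symmetry is what happens in wreath products (Section~\ref{ZwrZ}), where the base generators have trivial image in the top group and your position-weighting formula is exact. But for a general metabelian group, reduced (as the paper does, since palindromic width passes to quotients---this also makes your appeal to Hall's theorem unnecessary) to the free metabelian group $F/F''$, the natural module coordinates on $F'/F''$ are the exponent functions $f_k$ of conjugates of the commutators $\rho_k=[x_i,x_j]$, and these arise \emph{quadratically} in the letters, not through a letterwise weighting. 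Reversing a word replaces each commutator by a conjugate of its \emph{inverse}, via the identity $x_j^{-1}x_i^{-1}\rho_k x_ix_j=\overline{\rho_k^{-1}}$, so palindromes correspond to \emph{skew}-symmetric coordinates, $f_k(\x)=-f_k(2\mathbf{p}-\x)$, about the commutator-dependent centre $\mathbf{p}=-\frac{1}{2}(\evec_i+\evec_j)$ (Lemma~\ref{lem:skew symmetric gives palindrome}), not to reflection-invariant ones. A concrete test: $h=[x_1,x_2]$ has coordinate function equal to $1$ at the origin and $0$ elsewhere, which is symmetric about $\mathbf{0}$, so your criterion would make $h$ a palindrome; yet its image in $\Z\wr\Z$ under $x_1\mapsto t$, $x_2\mapsto a$ is the element with lamp configuration $+1$ at position $1$, $-1$ at position $0$, and lamplighter at $0$, which is not symmetric about $0$ and hence not a palindrome---and palindromicity passes to quotients.

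Once the sign is corrected, a second, equally serious gap appears in your assembly step. Skew-symmetric functions centred at points of $\mathbf{p}+\{\mathbf{0},\evec_1,\ldots,\evec_r\}$ with $\mathbf{p}\in\Z^r$ have vanishing sums over each double grid $2\Z^r+\vvec$; these sums are therefore invariants, and an arbitrary finitely supported $f_k$ (for instance that of $[x_1,x_2]$, with total sum $1$) is \emph{not} a sum of boundedly many such pieces. This is exactly why Lemmas~\ref{lem:skew symmetric functions half} and~\ref{lem:skew symmetric functions} carry the hypotheses $\sum_{\x\in 2\Z^r+\vvec}f(\x)=0$, which you invoke without verifying, budgeting only for a discrepancy ``in the $Q$-coordinate.'' The paper must first multiply $h$ by the battlement words $q_{k,\vvec}$ of Lemma~\ref{lem:palindromic length in F'/F''}, each a product of at most $2r+3$ palindromes, to kill the grid sums before Lemma~\ref{lem:palindromic length when delta sums zero} applies; this correction contributes the dominant $2^{r-1}r(r+1)(2r+3)$ term of the bound and cannot be dispensed with. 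Relatedly, your diagnosed ``main obstacle''---non-integrality of the averaging $\frac{1}{2}\bigl(f(\x)+f(-\x)\bigr)$---is a red herring: the paper's plain-symmetric decompositions (Lemma~\ref{lem:r-dim symmetric functions}) need no averaging and no sum conditions at all; the genuine difficulties are the reversal sign and the grid-sum obstructions just described.
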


Our proof is self-contained. 
 Bardakov \& Gongopadhyay use a result of Akhavan-Malayeri and Rhemtulla \cite{AMR98}, which in turn uses a result from the unpublished PhD thesis of Stroud \cite{Stro66}, details of which may also be found in \cite{Segal09}. However they established more, namely that free abelian-by-nilpotent groups have finite palindromic width.  In their sequel \cite{BGnilpotentpals2}, Bardakov \& Gongopadhyay have investigated lower bounds for the palindromic width of nilpotent groups and abelian--by--nilpotent groups.   
  Also using work of Akhavan-Malayeri concerning the nature of commutators \cite{Akha2010wreath}, E.{} Fink claims that the wreath product of a finitely generated free group with a finitely generated free abelian group,  and hence also the wreath product of any finitely generated group with a finitely generated free abelian group, has finite palindromic width, \cite{FinkPalindromes}. 

\emph{Boundedly generated}  groups  provide many examples with finite palindromic width.  A group  $G$ is boundedly generated when there exist ${a_1, \ldots, a_k \in G}$ such that every element can be expressed as $a_1^{r_1} \cdots a_k^{r_k}$ for some ${r_1, \ldots, r_k \in \Z}$.    In such groups, $\PalWidth(G, \{a_1, \ldots, a_k\}) \leq k$.  They include all finitely generated solvable minimax groups  \cite{Kr84} (and so  all finitely generated nilpotent or, more generally, polycyclic  groups),  a  non-finitely presentable example of Sury \cite{Su97}, and $\SL{n}$ for $n \geq 3$  with respect to elementary matrices \cite{CK83} and generalizations \cite{Mu95,Ta90}.   All finitely presented, torsion-free, abelian-by-cyclic groups (and so all solvable Baumslag--Solitar groups)  are boundedly generated:  by \cite{BS78} (see also \cite[\S1.1]{FM01})  they have presentations  
$$\langle \, t, a_1, \ldots , a_m \, \mid  \, a_i a_j=a_j a_i, \ ta_i t^{-1} = w_i(a_1,\ldots ,a_m), \ \forall i,j  \, \rangle$$ and each element can be represented as $t^{-i}a_1^{r_1}\cdots a_m^{r_m}t^j$ for some  non-negative integers  $i,j$ and some $r_1, \ldots, r_m \in \Z$.  And $$\Z_2 \ast \Z_2  \ = \  \langle x,y \mid x^2=y^2=1 \rangle$$  is boundedly generated as every element is expressible as   $(xy)^l$, $(xy)^lx$, or $y(xy)^l$ for some $l \in \Z$.  Passing to or from subgroups of finite index preserves bounded generation \cite[Exercise~4.4.3]{BHV}, as does passing to a quotient.

 There are finitely generated metabelian groups which are not boundedly generated, for example,  $\Z \wr \Z$ \cite{ENS}.   So: 

\begin{cor}\label{cor:main}
There are finitely generated groups with finite palindromic width (with respect to all finite generating sets) which are not boundedly generated.   
\end{cor}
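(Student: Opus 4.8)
The plan is to exhibit a single finitely generated group that simultaneously witnesses both required properties, and the natural candidate---already flagged in the paragraph preceding the corollary---is $\Z \wr \Z$. First I would record that $\Z \wr \Z$ is finitely generated (by $a$ and $t$, exactly as in the presentation appearing in Theorem~\ref{cor:ZwrZ}) and that it is metabelian: its base group $\bigoplus_{k \in \Z} \Z$ is abelian, and the quotient by the base group is $\Z$, so its commutator subgroup sits inside the abelian base and is therefore abelian. Consequently Theorem~\ref{thm:main} applies to it verbatim, giving that $\PalWidth(\Z \wr \Z, X)$ is finite for \emph{every} finite generating set $X$, which is precisely the first of the two demands in the corollary.

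Next I would invoke the failure of bounded generation. This is the only input not produced inside the paper itself: by \cite{ENS}, $\Z \wr \Z$ is not boundedly generated. Since bounded generation is a property of the abstract group rather than of a chosen generating set, there is no quantifier to juggle here. Combining this with the previous paragraph yields the corollary at once---$\Z \wr \Z$ is a finitely generated group of finite palindromic width, with respect to all finite generating sets, which is not boundedly generated.

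Because both constituent facts are already in hand, there is no genuine obstacle in assembling the corollary; all of the mathematical content has been pushed into Theorem~\ref{thm:main} on the one side and into \cite{ENS} on the other. If one wanted a fully self-contained argument, the single place demanding real work would be reproving that $\Z \wr \Z$ is not boundedly generated, and that is exactly the statement I would be content to cite rather than reestablish.
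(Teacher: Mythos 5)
Your proposal is correct and matches the paper's own argument exactly: the corollary is deduced by applying Theorem~\ref{thm:main} to the metabelian group $\Z \wr \Z$ (which yields finite palindromic width for every finite generating set) and citing \cite{ENS} for the failure of bounded generation. Nothing is missing, and nothing differs in substance from the paper's route.
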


 A group $G$ satisfies the maximal condition for normal subgroups (max-n)  if for every normal subgroup $N$ of $G$, there is a finite subset which normally generates $N$. Finitely generated (abelian-by-polycyclic)-by-finite groups are examples \cite{Hall54}.
 We extend  Theorem~\ref{thm:main} to:

\begin{thm}\label{thm:max n}
If $G$ is a finitely generated solvable group satisfying max-n, then $G$ has a finite generating set $B$ such that $\PalWidth(G,B)$ is finite.
\end{thm}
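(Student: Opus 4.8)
The plan is to induct on the derived length $d$ of $G$. Since both solvability and max-n are inherited by quotients, every quotient of $G$ is again a finitely generated solvable max-n group, so the induction is well founded. When $d \leq 2$ the group is metabelian and Theorem~\ref{thm:main} already gives finite palindromic width with respect to every finite generating set, which serves as the base case.

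For the inductive step, suppose $d \geq 3$ and set $A := G^{(d-1)}$, the last nontrivial term of the derived series. Then $A$ is an abelian normal subgroup of $G$, and $Q := G/A$ is a finitely generated solvable max-n group of derived length $d-1$; by the inductive hypothesis $Q$ carries a finite generating set $\bar B$ with $\PW(Q,\bar B)$ finite. The point at which max-n enters is the following: $A$ is finitely generated as a normal subgroup of $G$, and since $A$ is abelian its conjugation action factors through $Q$, so $A$ is finitely generated as a module over the group ring $\Z Q$. I would fix module generators $a_1,\dots,a_m$ of $A$ and take $B$ to consist of lifts to $G$ of the generators in $\bar B$, together with $a_1,\dots,a_m$; as $d$ is finite, iterating this across the induction keeps $B$ finite.

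Given $g \in G$, I would first write its image $\bar g \in Q$ as a product of at most $\PW(Q,\bar B)$ palindromes over $\bar B$, lift each palindromic word letter-by-letter to $B$ (a letterwise lift of a palindromic word is again palindromic), and multiply. The result is a bounded product of palindromes in $G$ whose image in $Q$ is $\bar g$, so it differs from $g$ by a single element $a \in A$. Everything therefore reduces to the crux: expressing an arbitrary $a \in A$ as a product of boundedly many palindromes of $G$. Here the obstruction is genuine, because $A$ is typically not finitely generated as an abelian group (already in $\Z[1/p] \rtimes \Z$), so one cannot simply multiply out the finitely many $a_i$; a general element of $A$ is an unbounded product of $Q$-conjugates $q \cdot a_i$, and these conjugates must be packed into a bounded number of palindromes.

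The mechanism I would use is the one underlying Theorem~\ref{wr thm}: a palindromic word of $G$ lying in $A$ is exactly the lift of a palindromic word of $Q$ representing the identity, and the assignment of its value in $A$ is governed by how word reversal interacts with the $\Z Q$-module structure. Thus the task is to produce, at bounded palindromic cost, enough such ``palindromic relations'' in $Q$ to surject onto the finitely generated module $A$ --- precisely the higher-dimensional analogue of the symmetric and skew-symmetric function decompositions of Lemma~\ref{lem:r-dim symmetric functions} and its ring-valued refinements. I expect the main obstacle to be that, for $d \geq 3$, the acting quotient $Q$ is non-abelian, so word reversal on $Q$ is an anti-automorphism rather than the clean reflection $q \mapsto -q$ available when the index group is $\Z^r$; the reflection symmetry that makes the skew-symmetric decomposition transparent must be replaced by an argument that manufactures the required null-palindromes directly from the finite module generators together with a bounded palindromic expression for each, after which the counts assemble into the stated finite bound for $\PW(G,B)$.
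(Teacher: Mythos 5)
Your framework is sound up to the point you yourself call the crux, and then the proof stops: you never actually express an arbitrary element $a$ of the abelian normal subgroup $A = G^{(d-1)}$ as a product of boundedly many palindromes, you only describe what an argument ``would'' have to do. That step is the entire content of the theorem. The reduction preceding it is fine (palindromes lift letterwise from $Q = G/A$, and max-n does give finite $\Z Q$-module generation of $A$), but the mechanism you propose for the crux --- extending the skew-symmetric function decompositions of Lemma~\ref{lem:skew symmetric functions half} to functions on a non-abelian $Q$ --- is exactly what the paper identifies as an open problem in its final section: when the index group is not $\Z^r$, word reversal is an anti-automorphism rather than the reflection $q \mapsto -q$, and when the derived factors include infinite-rank abelian groups it is, in the authors' words, ``not clear whether this will be possible.'' So the proposal is an honest plan with an unfilled hole at its decisive step, not a proof; the inductive scaffolding around it does no work until that hole is filled.

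The paper's actual proof avoids the module analysis and the induction entirely, and this is worth internalizing because it shows where max-n really enters. Max-n is used only once: to extend the generating set to a finite $B = A \cup A_1 \cup \cdots \cup A_{d-1}$ containing finite normal generating sets for the terms of the derived series. This places $G$ within the hypotheses of a theorem of Akhavan-Malayeri \cite{Akha06}: there is a constant $K$, depending on $|B|$, such that every element of $G'$ is a product of at most $K$ commutators of the form $[g,b]$ with $b \in B$, or conjugates of such. Two elementary observations from \cite{BG13} then finish: first, $[g,b] = (gb\overline{g})(\overline{g}^{-1}g^{-1})(b^{-1})$ is a product of three palindromes; second, conjugation increases palindromic length by at most one, since $hg_1g_2\cdots g_{2k}h^{-1} = (hg_1\overline{h})(\overline{h}^{-1}g_2h^{-1})\cdots(\overline{h}^{-1}g_{2k}h^{-1})$. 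Hence every element of $G'$ costs at most $4K$ palindromes, and $G/G'$ is finitely generated abelian, so has palindromic width at most the size of a generating set. In short, the known substitute for the symmetry argument you were missing is bounded commutator width with commutator entries from a fixed finite set; the skew-symmetric machinery of Sections~\ref{skew-symmetric functions section}--\ref{sec:palindromic width} is confined to the metabelian case precisely because there the acting group is $\Z^r$.
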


We understand that this result has been proved independently by Bardakov and Gongopadhyay \cite{BGsolvablepals}, where they in fact show that a finitely generated solvable group which is abelian-by-(max-n) has finite palindromic width. This therefore includes the finitely generated solvable groups of derived length 3. In the same paper they also provide a different proof of Theorem \ref{cor:ZwrZ}, first showing that $\Z \wr \Z$ has commutator width 1.

There are groups   known to have infinite palindromic width: rank-$r$ free groups $F(x_1, \ldots, x_r)$  for  all $r \geq 2$  \cite{BST05} with respect to $\{x_1, \ldots, x_r\}$ and, with the sole exception of  $\Z_2 \ast \Z_2$, all free products $\Asterisk_{i=1}^m G_i$ of non-trivial   groups  with respect to $\bigcup_i G_i$,  \cite{BT06}.   The proofs  in  \cite{BST05} and  \cite{BT06} are novel in that they use quasi-morphisms.

The structure of the paper is as follows. In Section \ref{ZwrZ} we consider the palindromic width of groups $G \wr \Z^r$, proving Theorem \ref{wr thm}. Section \ref{lower bound} gives the precise value for the palindromic width of $\Z \wr \Z$ (Theorem \ref{cor:ZwrZ}). Our proof for the result concerning metabelian groups is contained in Section \ref{metabelian groups section}, while Section \ref{sec:max-n} deals with solvable groups satisfying max-n.  We conclude with a discussion of   open questions in Section~\ref{Open questions}.

\medskip

\emph{Acknowledgements.}  We thank Valeriy Bardakov, Krishnendu Gongopadhyay,  Elisabeth Fink, and an anonymous referee for their comments.

\section{The palindromic width of $G \wr \Z^r$} \label{ZwrZ}

Suppose $G$ is a group with finite generating set $A$.  The group $G \wr \Z^r=  \left( \bigoplus_{\Z^r}G  \right) \rtimes \Z^r$, where we view  $\bigoplus_{\Z^r}G$  as the group of finitely supported functions $\Z^r \to G$ under coordinatewise multiplication, and  elements  $\mathbf{v}$ of the $\Z^r$-factor act  on  $\bigoplus_{\Z^r}G$ by the \emph{shift operation}:  $f^{\mathbf{v}}(\x) = f(\x-\mathbf{v})$ for all $\x \in \Z^r$.  Note that $G\wr \Z^r$ is generated by the union of  $A \times \{ \mathbf{0}\}$  and $\{1\} \times B$ where $B$ is the standard basis $B = \{\evec_1,\ldots , \evec_r\}$ for $\Z^r$.

\subsection{An example from $\Z \wr \Z$} \label{subsec:example}

Define $\Delta_{1} : \Z \to \Z$ to be  $1$ at $0$ and  $0$ elsewhere, and  $\mathbf{0} : \Z \to \Z$ to be everywhere $0$.   The standard  generating set for  $\Z \wr \Z$ is $\{ a,t \}$ where   $a = (\Delta_{1}, 0)$ and $t = (\mathbf{0}, 1)$.

Let $f : \Z \to \Z$ be the function whose non-zero values are given in the following table.   
$$\begin{array}{l|p{0.35cm}p{0.35cm}p{0.35cm}p{0.35cm}p{0.35cm}p{0.35cm}p{0.35cm}p{0.35cm}p{0.35cm}p{0.35cm}p{0.35cm}p{0.35cm}p{0.35cm}p{0.35cm}p{0.35cm}}
x 	& -7	&-6	&-5	&-4	&-3	&-2	&-1	& 0	& 1	& 2	& 3	& 4	& 5 & 6 & 7 \\ \hline
f(x)& 0		& 0 & 0 & 3 &-1 & 4 & 0 & 0 & 1 & 5 & 0 & 0 & 0 & 0 & 2 \\  
g(x)& 0		& -2&-2 & 1 & 0 & 4 &-1 &-2 &-1 & 4 & 0 & 1 &-2 &-2 & 0 \\  
h(x)& 0 	& 2 & 2 & 2 &-1 & 0 & 1 & 2 & 2 & 1 & 0 &-1 & 2 & 2 & 2 
\end{array}.$$ 
We will explain how to express $(f,7)$ as a product of three palindromes.  The ideas apparent here are the core of the proof of our upper bounds on the palindromic width of general  $G \wr \Z^r$.

The table above also shows the non-zero values of functions $g,h: \Z \to \Z$ such that $f=g+h$ with $g$ being symmetric about zero (that is, $g(x) = g(-x)$ for all $x$) and $h$ being symmetric about $\frac{1}{2}$ (that is, $h(x) = h(1-x)$ for all $x$).

Here is how these $g$ and $h$ were found.  We sought $g$ supported on 
 $\{-7,-6,\ldots, 6,7\}$ and $h$ supported on 
 $\{-6,-5,\ldots, 6,7\}$, which was reasonable since the  support of $f$ is a subset of each  these sets and they are symmetric about $0$ and $\frac{1}{2}$, respectively.

Since $-7$ is outside the support of $h$ we have $h(-7)=0$. We deduced from the requirement that $f=g+h$ that $g(-7)=0$. 
From here,  symmetry of $g$ about $0$ and of $h$ about $\frac{1}{2}$, together with  $f=g+h$,  determined the remaining values taken by $g$ and $h$. The symmetry allowed us  to ``jump'' to the other side of $0$ or $\frac{1}{2}$. Once there we applied $f=g+h$ and were ready to ``jump'' again.

Specifically in this example symmetry of $g$ about $0$ gave $g(7)=g(-7)=0$, and then  $h(7)=f(7)-g(7)=2$. Then, using the symmetry of $h$ about $\frac{1}{2}$, we found $h(-6)=h(6)=2$, and then $g(-6)=-2$ (as   $f=g+h$). We   continued in this way until the table was complete.

As we will shortly explain, the palindromes
\begin{eqnarray*}
w_g & := & t^{-6} a^{-2}ta^{-2}tat^2a^4ta^{-1}ta^{-2}ta^{-1}ta^4t^2ata^{-2}ta^{-2}t^{-6} \\ 
w_h & := &  t^{-6}a^2ta^2ta^2ta^{-1}t^2ata^2ta^2tat^2a^{-1}ta^2ta^2ta^2t^{-6}.
\end{eqnarray*}
represent $(g,0)$ and $(h,1)$, respectively.  
So the product $w_gw_ht^6$  of three palindromes represents $(f,7)$.

How we obtained $w_g$ and $w_h$ is best explained using the \emph{lamplighter model} of $\Z\wr\Z$. 
View the real line as an infinite street and imagine a lamp at each integer point. Each lamp has $\Z$--many states. A \emph{lamp configuration} is an assignment of a state (an integer) to each lamp with all but finitely many lamps assigned zero.  (Equivalently a lamp configuration is a  finitely supported function $\Z\to\Z$.) Elements of $\Z\wr\Z$ are   represented by a lamp configuration together with a choice of lamp by which a lamplighter is imagined to stand. The identity is represented by  the street in darkness, all lamps are in state zero, with the lamplighter at position zero.  

The generators $t$ and $a$ of $\Z\wr\Z$ act  in the following manner: applying $t$  moves the lamplighter one step right  (that is, in the positive direction); applying $a$ adds one to the state of the bulb at the location of the lamplighter.

Group elements that can be represented by palindromes on $\{a^{\pm 1}, t^{\pm 1}\}$  can be recognised as those  for which  the lamp configuration is symmetric about some point $m/2$, where $m \in \Z$, and such that the lamplighter finishes at position $m$.

For example, in the instance of $(g,0)$  we begin by sending the lamplighter from zero to the leftmost extreme of the support of $g$. This is 6 steps left, so is an application of $t^{-6}$. Next we change the state of the bulb here to $g(-6)=-2$, so we apply $a^{-2}$. Now we proceed right one step at a time by applying $t$. After each step we adjust the state of the bulb according to the function $g$ by applying $a$ or $a^{-1}$ the appropriate number of times. When we reach the rightmost extreme of the support of $g$, we will have a lamp configuration which is symmetric about $0$. To finish, and give ourselves a palindrome, we need to repeat the first steps we took in reaching the left-most point of the support from the origin, namely we apply $t^{-6}$ again.

\subsection{The upper bounds}

Suppose $G$ is a group with generating set $A$.  In the  example above we expressed $f : \Z \to \Z$ as a sum of two symmetric functions. We will generalise  this to expressing functions $f:\Z^r \to G$ as products of symmetric functions.   

Let $(A \cup A^{-1})^\ast$ denote the free monoid on the set $A \cup A^{-1}$---that is, the (finite) words on $A$ and $A^{-1}$.  For  $u$ in this free monoid, let $\overline{u}$ denote the same word written in reverse.
Let $\varepsilon$ denote the empty word.
For $\gamma \in G$, define   $\Delta_\gamma:\Z^r \to (A \cup A^{-1})^{\ast}$ by $\Delta_\gamma(\mathbf{0}) = \gamma$ and $\Delta_\gamma(\x) = \varepsilon$ for all $\x \neq \mathbf{0}$.  
Let $\evec_1,\ldots,\evec_r$ be the standard generating set for $\Z^r$.
 
\begin{lemma}\label{lem:r-dim symmetric functions}
Suppose $f : \Z^r \to (A \cup A^{-1})^\ast$ is finitely supported. Then $$f \ = \ \Delta_\gamma f_0f_1\ldots f_r$$ in $\bigoplus_{\Z^r}G$ for some $\gamma \in G$ and some finitely supported $$f_0,f_1, \ldots, f_r : \Z^r \to (A \cup A^{-1})^\ast$$ such that $f_0(\x)=\overline{f_0(-\x)}$ and $f_i(\x)=\overline{f_i(\evec_i-\x)}$ for $i = 1, \ldots , r$.
\end{lemma}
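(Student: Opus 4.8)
The plan is to induct on $r$, using the mechanism of the worked example—made precise at the level of words—both as the base case and, applied ``slicewise'', as the engine of the inductive step. For a word $u$ write $u^{-1}$ for the formally inverted reversed word, so that $\mathrm{eval}(u^{-1})=\mathrm{eval}(u)^{-1}$ in $G$; throughout, the symmetry relations $f_0(\x)=\overline{f_0(-\x)}$ and $f_i(\x)=\overline{f_i(\evec_i-\x)}$ are to hold as identities of \emph{words}, while the factorisation $f=\Delta_\gamma f_0\cdots f_r$ is required only after evaluation into $\bigoplus_{\Z^r}G$. For $r=1$, choose $N$ with $\mathrm{supp}(f)\subseteq\{-N+1,\ldots,N\}$ and ask for $f_0$ supported on $\{-N,\ldots,N\}$ and $f_1$ on $\{-N+1,\ldots,N\}$. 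Starting from $f_1(N+1)=\varepsilon$, I would define by downward induction on $k=N,\ldots,1$ first a word $f_0(k)$ for which the evaluated equation at $-k$ holds (using $f_0(-k)=\overline{f_0(k)}$ and $f_1(-k)=\overline{f_1(k+1)}$, both already available), and then $f_1(k):=f_0(k)^{-1}f(k)$, which makes the evaluated equation at $k$ hold. Symmetry then fixes $f_0,f_1$ at the negative arguments, and both are $\varepsilon$ outside their supports.

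The one point at which this propagation cannot close is the origin, and that is exactly what $\Delta_\gamma$ absorbs: the relation $f_0(0)=\overline{f_0(0)}$ forces $f_0(0)$ to be a palindrome, so I would take $f_0(0):=\varepsilon$ and read off $\gamma:=\mathrm{eval}(f(0))\,\mathrm{eval}(f_1(0))^{-1}$, where $f_1(0)=\overline{f_1(1)}$ is already known. A routine check over the regimes $x=0$, $0<|x|\le N$, and $|x|>N$ confirms $f=\Delta_\gamma f_0f_1$. I record for later use that the construction is free to place the origin value of the symmetric factor $f_0$ at $\varepsilon$; nothing in it uses that $G$ is free, only that the ``values'' form a monoid carrying an anti-involution $\overline{\cdot}$, an evaluation to a group, and formal inverses.

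For the inductive step I would write $\Z^r=\Z^{r-1}\times\Z$ with last coordinate $x_r$ and view $f$ as a finitely supported function of $x_r$ whose values are the slices $\x'\mapsto f(\x',x_r)$, elements of the value group $H:=\bigoplus_{\Z^{r-1}}G$. Running the one-dimensional construction verbatim in the variable $x_r$, but with the word-reversal replaced by the anti-involution $\tau(\phi)(\x'):=\overline{\phi(-\x')}$ on slices and with inverses taken pointwise, yields $f=\Delta^{(r)}_\delta\cdot P\cdot f_r$, where $\Delta^{(r)}_\delta$ is supported on the hyperplane $x_r=0$ and equals some $\delta:\Z^{r-1}\to(A\cup A^{-1})^\ast$ there, $P$ is $\tau$-symmetric about $0$, and $f_r$ is $\tau$-symmetric about $\tfrac12$. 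Unwinding $\tau$ gives exactly $P(\x)=\overline{P(-\x)}$ and $f_r(\x)=\overline{f_r(\evec_r-\x)}$, so $f_r$ already carries the full point-reflection symmetry demanded of it; and the freedom noted above lets me take $P$ to vanish on the hyperplane $x_r=0$. I would then apply the inductive hypothesis to $\delta$ on $\Z^{r-1}$, writing $\delta=\Delta_\gamma\,\delta_0\delta_1\cdots\delta_{r-1}$ with $\delta_0$ symmetric about $\mathbf 0$ and vanishing there. Embedding each $\delta_i$ as the function $\tilde\delta_i$ supported on $x_r=0$ produces factors with the correct $\Z^r$-symmetries, and $\Delta^{(r)}_\delta=\Delta_\gamma\,\tilde\delta_0\tilde\delta_1\cdots\tilde\delta_{r-1}$. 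Since $P$ vanishes on the hyperplane while every $\tilde\delta_i$ is supported on it, $P$ commutes (disjoint support) with each $\tilde\delta_i$ and may be slid to sit immediately after $\tilde\delta_0$; because $P$ and $\tilde\delta_0$ have disjoint supports, $f_0:=\tilde\delta_0 P$ is again symmetric about $\mathbf 0$. Setting $f_i:=\tilde\delta_i$ for $1\le i\le r-1$ gives $f=\Delta_\gamma f_0f_1\cdots f_{r-1}f_r$, as required, the base case $r=0$ being trivial.

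The main obstacle—and the reason the slice operation $\tau$, rather than a plain reflection, is forced on us—is the clash between the order-reversing word-reversal and the non-commutativity of $G$: a product of two functions each symmetric about $\mathbf 0$ is generally \emph{not} symmetric about $\mathbf 0$, so the two origin-symmetric contributions (the slicewise $P$ and the recursive $\tilde\delta_0$) cannot be merged naively. The entire scheme hinges on arranging these two pieces to have disjoint supports, which is precisely what the freedom to set the symmetric factor's origin value to $\varepsilon$ provides. The remaining care is in verifying that $\tau$ is a genuine anti-involution compatible with evaluation into $H$ and with pointwise inverses, so that the one-dimensional engine really does run unchanged over the value group $H$.
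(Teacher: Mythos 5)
Your proof is correct and takes essentially the same route as the paper's: the same outward-seeded zigzag recursion in one dimension with the leftover at the origin absorbed into $\Delta_\gamma$, and the same induction on $r$ in which the $\evec_r$-direction is handled slicewise (your anti-involution $\tau$ is precisely the paper's explicit formulas $g(\x,i)=\overline{g(-\x,-i)}$ and $h(\x,i)=\overline{h(-\x,1-i)}$ in disguise), leaving a word-valued function on the hyperplane $x_r=0$ to which the inductive hypothesis is applied. The only divergence is presentational but welcome: by arranging $P$ to be $\varepsilon$ on the hyperplane and merging it with $\tilde\delta_0$ via disjointness of supports, you spell out the recombination of the off-hyperplane symmetric part with the inductive $f_0$, a step the paper leaves implicit when it extends the inductive factors by $\varepsilon$ outside $S$.
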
 

\begin{proof}
We induct on $r$. For $r=1$, take $n >0$ such that $f(j) =\varepsilon$ for all $j$ for which $|j| >n$.  Define $g$ and $h$ as follows. First set $h(-n) :=  \varepsilon$.  Then, for $i=n, \ldots , 1$, define
$$\begin{array}{lllll}
\overline{g(i)}	& := 	& g(-i) 	& := & f(-i)  h(-i)^{-1} \\
h(i) 	& := 	& \overline{h(1-i)} & := &  g(i)^{-1} f(i) 
\end{array}$$
and finish by setting $g(0):= \varepsilon$ and $g(j) = h(j) =\varepsilon$ for all $j$ for which $|j| >n$.  Then take $\gamma :=  f(0) h(0)^{-1} $. By construction, $f_0:=g$ and $f_1 :=h$ have the required properties.

Now suppose $r>1$.
Let $n>0$ be such that $f$ is supported on $\{-n, \ldots , n-1, n\}^r$. We first define functions $g,h : \Z^r \to (A \cup A^{-1})^\ast$ such that $f=gh$, with $h$ carrying the required symmetry about $\frac{1}{2}\evec_r$ and the symmetry of $g$ about the origin being satisfied everywhere except in the codimension 1 subspace orthogonal to $\evec_r$.

Consider $\Z^r$ as $\Z^{r-1}\times \Z$, where the $1$--dimensional factor is the span of $\evec_r$. 
For $\x \in \Z^{r-1}$, recursively define $g(\x,i)$ and $h(\x,i)$ for $i=1,\ldots, n$ and $g(-\x,i)$ and $h(-\x,i)$ for $i=0,-1,\ldots, -n$ as follows. First set $h(-\x,-n):=\varepsilon$. Then, for $i=n,\ldots, 1$, define
$$\begin{array}{lllll}
\overline{g(\x,i)}	& := 	& g(-\x,-i) 	& := & f(-\x,-i)h(-\x,-i)^{-1} \\
h(\x,i) 	& := 	& \overline{h(-\x,1-i)} & := & g(\x,i)^{-1}f(\x,i)
\end{array}$$
and
$$g(\x,0):=f(\x,0)h(\x,0)^{-1}.$$

Since $g$ and $h$ are supported on $\{-n,\ldots, n-1, n\}^r$, this defines them everywhere on $\Z^r$. Let $S:=\{(\x,0) \mid \x \in \Z^{r-1}\}$. Note that $h(\x,i)=\overline{h(-\x,1-i)}$ for all $(\x,i) \in \Z^r$ and so $f_r:=h$ satisfies the condition required by the lemma. However, we only know that  $g(\x,i)=\overline{g(-\x,-i)}$ for all $(\x,i) \in \Z^r \smallsetminus S$, and so $g$ cannot serve as $f_0$ as it stands.

 By the inductive hypothesis, the restriction of $g$ to $S$ can be expressed as the product of $\Delta_\gamma$, for some $\gamma \in G$, and suitably symmetric functions $f_0,f_1, \ldots, f_{r-1} : \Z^{r-1} \to (A \cup A^{-1})^\ast$. Extend these to be functions on $\Z^r$ by defining them to be $\varepsilon$ outside of $S$. Note that they retain their symmetry. The product $f=\Delta_\gamma f_0f_1\ldots f_{r-1} f_r$  is therefore the required expression.
\end{proof}

 We are now ready to prove the upper bounds of Theorem~\ref{wr thm}.  We will  show that if  $x_1, \ldots, x_r$ are generators for $\Z^r$, which will we now write multiplicatively, then $\PalWidth(G\wr \Z^r,A\cup\{x_1,\ldots, x_r\}) \leq 3r+\PalWidth(G,A).$

  We will explain how to write an arbitrary element $(f,x_1^{t_1}\ldots x_r^{t_r}) \in G \wr \Z^r$ as a product of at  most $3r+\PalWidth(G,A)$ palindromes. Choose $n>0$ so that $f$ is supported on $\{-n,\ldots n-1,n\}^r$. Iteratively define  palindromes $u_i$, for $i=1,\ldots, r$, as follows:
 $$\begin{array}{rcl}
 u_1 &:=& x_1^{2n},\\
 u_2 &:=& (u_1x_2u_1^{-1}x_2)^n u_1,\\
 &\vdots&\\
 u_{r} &:=& (u_{r-1} x_{r} u_{r-1}^{-1} x_{r})^n u_{r-1}.
 \end{array}$$
So $u_r$ defines a Hamiltonian path which snakes around the cube $[0,2n]^r$. Set $u:=x_1^{-n}\ldots x_r^{-n} u_r x_r^{-n} \ldots x_1^{-n}$.  Insert  the words $f(\x)$ for all  $\x \in \{-n,\ldots , n-1,n\}^r$  into $u$ as follows. For each such $\x$ there exists a unique prefix $u(\x)$ of $u_r$ such that $x_1^{-n}\ldots x_r^{-n} u(\x) = \x$. Rewrite $u$ by inserting $f(\x)$ after $x_1^{-n}\ldots x_r^{-n} u(\x)$, for each such $\x$. Denote the resulting word by $u^f$.
 
 Suppose $g:\Z^r \to (A \cup A^{-1})^\ast$ satisfies $g(\x)=\overline{g(-\x)}$ for all $\x \in \Z^r$. Then $u^g$ will be a palindrome and will represent $(g,0)$ in $G \wr \Z^r$.
 
 We wish to  produce words in a similar manner for a function $h:\Z^r \to (A \cup A^{-1})^\ast$ which satisfies $h(\x) = \overline{h(\evec_i-\x)}$, for all $\x \in \Z^r$. After permuting the basis vectors, we may assume $i=1$. Define palindromes $v_i$ as follows:
 $$\begin{array}{rcl}
 v_1 &:=& x_1^{2n+1},\\
 v_2 &:=& (v_1 x_2 v_1^{-1} x_2)^n v_1,\\
 &\vdots&\\
 v_{r} &:=& (v_{r-1} x_{r} v_{r-1}^{-1} x_{r})^n v_{r-1},
 \end{array}$$
where $n$, as before, is taken so that $\textup{Supp}(h) \subseteq \{-n,\ldots , n-1, n\}^r$. 
 Set $v:=x_1^{-n}\ldots x_r^{-n} v_r x_r^{-n} \ldots x_1^{-n} x_1^{-1}$, which is not a palindrome, but rather is the product of two palindromes,   the second   being $x_1^{-1}$.
 
 We rewrite $v$, as we did for $u$, by inserting $h(\x)$ at the appropriate points to give a word $v^h$.  The symmetric properties held by $h$ mean that when we insert $h(\x)$ and $h(\evec_1-\x)$ into the appropriate places of $v$ we will still have a product of two palindromes, with the second palindrome being $x_1^{-1}$ as was originally the case. Thus  the word $v^h$ will be the product of two palindromes and moreover will represent $(h,0)$ in $G \wr \Z^r$.  
 
 Express the function $f$ as per Lemma \ref{lem:r-dim symmetric functions} as $f=\Delta_\gamma f_0f_1\ldots f_r$. In $G\wr \Z^r$ the element $(f_0,0)$ is represented by the palindrome $u^{f_0}$, and each $(f_i,0)$, for $i=1, \ldots , r$, is represented by the product $v^{f_i}$ of two palindromes. Let $\pi$ be a product of at most $\PW(G,A)$ palindromes representing $\gamma$.   Then
 		$$(f,0) \ = \  \pi u^{f_0} v^{f_1} \ldots v^{f_r}$$
   is the product of at most $\PW(G,A)+2r+1$ palindromes. Finally, we post-multiply by $x_r^{t_r}\ldots x_1^{t_1}$ to obtain a word for $(f,x_1^{t_1}\ldots x_r^{t_r})$. Since the second palindrome of $v^{f_r}$ is $x_r^{-1}$, this is absorbed into the first palindrome of $x_r^{t_r}\ldots x_1^{t_1}$. Thus we obtain a word representing $(f,x_1^{t_1}\ldots x_r^{t_r})$ which is the product of $\PW(G,A)+3r$ or fewer palindromes.

When $r=1$ we can modify our proof of Lemma~\ref{lem:r-dim symmetric functions}  to obtain a stronger upper bound for $\PalWidth(G\wr \Z, A \cup\{t\})$ as follows.
 
We construct $g$ and $h$ from $f$ as in Lemma \ref{lem:r-dim symmetric functions}, but with one difference. We absorb one palindrome from an expression for $\gamma$ into $g(0)$, which had been taken to be the empty word in the proof of the lemma.

Suppose that $f(0) h(0)^{-1} = w_1 \ldots w_k$, where $k$ is minimal such that each $w_i$ is a palindrome. Set $g(0)=w_k$, which is allowed since $w_k$ is a palindrome. Then $\gamma:=f(0) h(0)^{-1}g(0)^{-1}$ can be expressed as the product of $k-1$ palindromes. In particular,  $k\leq \PalWidth(G,A)$, leading to:

$$\PalWidth(G \wr \Z, A  \cup \{t\}) \ \leq  \ 2+\PalWidth(G, A ).$$

\section{The palindromic width of $\Z \wr \Z$ is at least $3$} \label{lower bound}

Here we show that  $\PalWidth(\Z\wr\Z,\{a,t\}) \geq 3$ and so complete our proof of Theorem~\ref{cor:main}.

Let $f:\Z \to \Z$ have support $\{0,1\}$ and suppose $f(0) \neq f(1)$. We will show that $(f,3)$ cannot be  expressed as the product of two palindromic words.

First note that $(g,r) \in \Z \wr \Z$ can be expressed as a palindrome if and only if $g(x)=g(r-x)$ for all $x \in \Z$. Indeed, given a palindrome on $\{a,t\}$, the lamp configuration obtained from this word must be symmetric about $\frac{1}{2}r$  (see Section \ref{subsec:example}), implying that the corresponding function must be symmetric about this point, as required. Conversely, if $g$ is symmetric about $\frac{1}{2}r$, then we can construct a palindrome in which the lamplighter will run first to the smallest lamp in the support, and then run in the positive direction, turning on all lamps to the appropriate configuration, and finishing off by running to $r$. The reader may check that the word obtained from this journey is indeed a palindrome.

Suppose there exists $p,q \in \Z$ and $g,h:\Z \to \Z$ such that $g$ is symmetric about $\frac{1}{2}p$, $h$ is symmetric about $\frac{1}{2}q$ and $(f,3)=(g,p)(h,q)$. Let $h_0$ be the shift of $h$ by $p$---that is, $h_0(x)=h(x-p)$ for $x \in \Z$. Thus $p+q=3$ and 
\begin{equation}\label{eq:bounce}
g(x)+h_0(x)=f(x)
\end{equation}
for all $x \in \Z$. We will show that at least one of $g$ or $h_0$ (hence $h$) must have infinite support.

We claim that $g(x) = g(x+3)$ except possibly when $x \in \{-3,-2,p-1,p\}$. We use the equalities:

\begin{equation}\label{eq:g right}
g(x)  \ = \   g(p-x) \  = \   -  h_0(p-x)  \ = \    -  h_0(x+3)  \ = \    g(x+3)  
\end{equation}
which follow from, in order, firstly symmetry of $g$ through $\frac{1}{2}p$, secondly   equation \eqref{eq:bounce}  assuming $f(p-x)=0$, thirdly   symmetry of $h_0$ through $p+\frac{1}{2}q$, and finally   a second application of equation \eqref{eq:bounce} assuming $f(x+3) =0$. As $\Supp{f} = \{0,1\}$, this  can fail only if $x \in \{-3,-2,p-1,p\}$.

Similarly,  
\begin{equation}\label{eq:h left}
h_0(x) \ = \   h_0(3+p-x) \ = \   - g(3+p-x)  \ = \   - g(x-3) \ = \  h_0(x-3)  
\end{equation}
provided $3+p-x$ and $x-3$ are not in $\{0,1\}$---that is,
provided $x \notin \{3,4,p+2,p+3\}$.

First suppose $p<0$. Then $g(x)=0$ for all $x \geq 0$, otherwise it will have infinite support. Symmetry of $g(x)$ about $\frac{1}{2}p$ then implies that $g(x)=0$ for $x \leq p$. Applying equation \eqref{eq:bounce} gives $\Supp{h_0} \subseteq \{p+1,\ldots , 1\}$ and $h_0(x)=f(x)$ for $x = 1,2$. 
Equation \eqref{eq:h left} implies that
$$h_0(x) \ = \  \left\{\begin{array}{ll}
f(0) & \textrm{if $x \equiv 0 \ (\mathrm{mod} \ 3)$,}\\
f(1) & \textrm{if $x \equiv 1 \ (\mathrm{mod} \ 3)$,}\\ 
0 & \textrm{if $x \equiv 2 \ (\mathrm{mod} \ 3)$}
\end{array}\right.$$  for $x \in \{p+1,\ldots , 1\}$: the values of $h_0$ at $0$, $1$ and $2$, determine those at  $-3$, $-2$ and $-1$, respectively, and then $-6$, $-5$ and $-4$, and so on until the value at $p$ which we cannot deduce from that at $p+3$.   
But then $h_0$ cannot be symmetric about any point: if  a  function $\Z \to \Z$ repeats a length-$3$ (or greater) pattern of distinct values on  an interval of length at least 3, and is zero elsewhere, then $f$ cannot be symmetric. (In the case $p=-1$, we use the fact that $\Supp{h_0}=\{0,1\}$ and $f(0) \neq f(1)$.)

 Now suppose $p\geq 0$. Then by equation~\eqref{eq:h left}, for $x \leq 1$, if $h_0(x) \neq 0$ then $h_0(x-3) \neq 0$. Thus,   finite support of $h_0$ implies  that $h_0(x) = 0$ for $x \leq 1$. Symmetry about $p+\frac{1}{2}q =\frac{1}{2}p + \frac{3}{2}$ then gives $h_0(x)=0$ for $x \geq p+2$.  In particular, by equation \eqref{eq:bounce}, $g(x)=f(x)$ for $x \leq 1$ and $x \geq p+2$, hence $\Supp{g} \subseteq\{0,\ldots , p+1\}$. Also, by \eqref{eq:g right}, for $x \in \{0,\ldots , p+1\}$, we have
$$g(x)= \left\{\begin{array}{ll}
f(0) & \textrm{if $x \equiv 0 \ (\mathrm{mod} \ 3)$;}\\
f(1) & \textrm{if $x \equiv 1 \ (\mathrm{mod} \ 3)$;}\\ 
0 & \textrm{if $x \equiv 2 \ (\mathrm{mod} \ 3)$.}
\end{array}\right.$$
As before, such a function cannot be symmetric. (When $p=0$ we use that $f(1) \neq 0$.)  

This covers all possible values of $p$, so $(f,3)$ cannot be expressed as the product of two palindromes.

\section{Finite palindromic width of metabelian groups} \label{metabelian groups section}

In this section we give our proof of Theorem~\ref{thm:main}. 

Let $F = F(x_1, \ldots, x_r)$ be a free group on $r$ generators and $F''$ be its second derived subgroup.  Then $F / F''$ is the \emph{free metabelian group of rank $r$}.   

The property of having finite palindromic width   passes to quotients.  Indeed, if  $G$ is a group with generating set $X$ and $\overline{G}$ is a quotient, then  $${\PalWidth(\overline{G},\overline{X}) \  \leq \  \PalWidth(G, X)},$$ where $\overline{X}$ is the image of $X$ under the quotient map.  So, it will suffice to prove Theorem~\ref{thm:main} for finitely generated \emph{free} metabelian groups with respect to their standard generating sets.  More precisely, we will prove 

that the palindromic width of the free metabelian group $F/F''$ of rank $r$ with respect to $x_1, \ldots, x_r$ is at most $2^{r-1}r(r+1)(2r+3)+4r+1.$
Bardakov \& Gongopadhyay give a better bound of $5r$ in \cite{BG13}. 

Our proof 
begins with a pair of lemmas  in Section~\ref{skew-symmetric functions section} on expressing finitely supported functions on $\Z^r$ as the sum of what we call \emph{skew-symmetric} finitely supported functions.    Then in  Section~\ref{sec:palindromic width} we determine a relationship between skew-symmetric functions and palindromes in the subgroup $F'/F''$ of the free metabelian group, which leads to the theorem. 

\subsection{Skew-symmetric functions on $\Z^r$}  \label{skew-symmetric functions section}

When dealing with $G \wr \Z$ in Section~\ref{ZwrZ}, we saw how palindromes were closely related to the symmetry of functions $f:\Z \to \Z$. However, when instead investigating the free metabelian groups, we shall relate palindromes to what we call  \emph{skew-symmetric} functions on $\Z^r$.  When $r=1$ these are the functions that are translates of odd functions.  In general, we say that a function $f$ from $\Z^r$  to a ring $R$ is \emph{skew-symmetric} if  there exists $\mathbf{p} \in \frac{1}{2}   \Z^r$ such that for all $\mathbf{x} \in \Z^r$,  $f(\mathbf{x}) = - f(2\mathbf{p} -\x)$---that is, its values at $\x$  and at the reflection of $\x$ in  $\mathbf{p}$ sum to zero. Note that, when $\mathbf{p} \in   \Z^r$, this condition at    $\mathbf{x} = \mathbf{p}$ is that    $2f(\mathbf{p})=0$.

Let $\mathbf{e}_1, \ldots, \mathbf{e}_r$ denote the standard basis of unit-vectors for $\R^r$.  The following lemma is for a ring $R$ and is written additively as appropriate for its forthcoming application, but we remark that the proof given works with $R$ replaced by any  abelian group.

\begin{lemma}\label{lem:skew symmetric functions half}
For all  $r \geq 1$ and all $\mathbf{p} \in \frac{1}{2}    \Z ^r $, every finitely supported function  $f:\Z^r \to R$ such that $\sum_{\mathbf{x} \in \Z^r} f(\mathbf{x})=0$  is the sum of $r+1$  finitely supported functions skew-symmetric  about $\mathbf{p}, \mathbf{p} + \frac{1}{2} \mathbf{e}_1, \ldots, \mathbf{p} + \frac{1}{2} \mathbf{e}_r$. 
\end{lemma}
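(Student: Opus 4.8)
The plan is to avoid the inductive ``bouncing'' used for Lemma~\ref{lem:r-dim symmetric functions} and instead argue by linearity, exploiting that skew-symmetry about a fixed point is a linear condition. For $\mathbf{c} \in \frac12\Z^r$ let $W_{\mathbf{c}}$ denote the set of finitely supported $f:\Z^r \to R$ that are skew-symmetric about $\mathbf{c}$. Each $W_{\mathbf{c}}$ is closed under addition and under the scaling $g \mapsto rg$ ($r\in R$), so it is an $R$-submodule of the module of all finitely supported functions. Consequently the set of functions expressible as $g_0 + g_1 + \cdots + g_r$ with $g_0$ skew about $\mathbf{p}$ and $g_i$ skew about $\mathbf{p}+\frac12\mathbf{e}_i$ is exactly the submodule $V := W_{\mathbf{p}} + W_{\mathbf{p}+\frac12\mathbf{e}_1} + \cdots + W_{\mathbf{p}+\frac12\mathbf{e}_r}$: membership in $V$ already packages the function as one summand per centre, since any two summands sharing a centre may be amalgamated. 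So it suffices to show that every finitely supported $f$ with $\sum_{\mathbf{x}} f(\mathbf{x}) = 0$ lies in $V$.

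Next I would reduce to generators. Writing $\delta_{\mathbf{a}}$ for the function that is $1$ at $\mathbf{a}$ and $0$ elsewhere, the finitely supported functions of total sum zero form the $R$-submodule generated by the \emph{adjacent differences} $\delta_{\mathbf{a}} - \delta_{\mathbf{a}+\mathbf{e}_j}$ (for $\mathbf{a}\in\Z^r$, $1\le j\le r$): indeed $f = \sum_{\mathbf{a}} f(\mathbf{a})\bigl(\delta_{\mathbf{a}} - \delta_{\mathbf{0}}\bigr)$ whenever $\sum f = 0$, and each $\delta_{\mathbf{a}} - \delta_{\mathbf{0}}$ telescopes along a lattice path into adjacent differences. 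Since $V$ is a submodule, it is enough to check that each adjacent difference lies in $V$.

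The crux is the identity
\[
\delta_{\mathbf{a}} - \delta_{\mathbf{a}+\mathbf{e}_j} \ = \ \bigl(\delta_{\mathbf{a}} - \delta_{2\mathbf{p}-\mathbf{a}}\bigr) + \bigl(\delta_{2\mathbf{p}-\mathbf{a}} - \delta_{\mathbf{a}+\mathbf{e}_j}\bigr),
\]
together with the observation that $2\mathbf{p}-\mathbf{a}$ is simultaneously the reflection of $\mathbf{a}$ in $\mathbf{p}$ and the reflection of $\mathbf{a}+\mathbf{e}_j$ in $\mathbf{p}+\frac12\mathbf{e}_j$ (both equal $2\mathbf{p}-\mathbf{a}$, which is why the middle terms cancel). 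The first bracket is a point mass minus its reflection in $\mathbf{p}$, hence skew-symmetric about $\mathbf{p}$; the second is the point mass at the reflection of $\mathbf{a}+\mathbf{e}_j$ in $\mathbf{p}+\frac12\mathbf{e}_j$ minus the point mass at $\mathbf{a}+\mathbf{e}_j$, hence skew-symmetric about $\mathbf{p}+\frac12\mathbf{e}_j$. Thus $\delta_{\mathbf{a}} - \delta_{\mathbf{a}+\mathbf{e}_j}\in W_{\mathbf{p}} + W_{\mathbf{p}+\frac12\mathbf{e}_j}\subseteq V$. The degenerate cases, in which one of the two point masses being subtracted coincides with its partner so that a bracket is the zero function, cause no trouble.

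Unwinding the submodule bookkeeping yields the claimed shape explicitly: collecting, across the telescoped expression for $f$, all first brackets into $g_0$ and all second brackets arising from direction $j$ into $g_j$ produces $f = g_0 + g_1 + \cdots + g_r$ with the stated symmetries. I expect the only real subtlety to be this final accounting --- confirming that the number of summands stays at $r+1$ rather than growing with $|\Supp{f}|$ --- which is exactly what the submodule (linearity) viewpoint secures, since each elementary difference feeds only two of the $r+1$ centres and like centres are merged. Notably this approach sidesteps the case analysis (integer versus half-integer coordinates of $\mathbf{p}$, and the resulting ``defect'' hyperplane) that would burden a direct induction on $r$ modelled on Lemma~\ref{lem:r-dim symmetric functions}, and it uses the hypothesis $\sum_{\mathbf{x}} f(\mathbf{x}) = 0$ in precisely one place: to express $f$ through adjacent differences.
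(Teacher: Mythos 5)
Your proof is correct, and it takes a genuinely different route from the paper's. The paper argues by induction on $r$: it splits $\Z^r = \Z^{r-1}\times\Z$, recursively ``bounces'' between the reflections about $\mathbf{p}$ and $\mathbf{p}+\frac{1}{2}\evec_r$ to construct $g$ and $h$ with $f=g+h$ (with separate cases according to whether the last coordinate of $\mathbf{p}$ is integral or half-integral), then zeroes $g$ out on the defect hyperplane $S$, checks via $\sum_{\x} f(\x)=0$ and the skew-symmetry of $g$ and $h$ that the error $f-g-h$, supported on $S$, again has sum zero, and absorbs it using the inductive hypothesis, merging like centres to keep the count at $r+1$. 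Your argument is instead linear and induction-free: the sum-zero functions form a submodule generated by the adjacent differences $\delta_{\mathbf{a}}-\delta_{\mathbf{a}+\evec_j}$, and your midpoint observation --- that $2\mathbf{p}-\mathbf{a}$ is simultaneously the reflection of $\mathbf{a}$ in $\mathbf{p}$ and of $\mathbf{a}+\evec_j$ in $\mathbf{p}+\frac{1}{2}\evec_j$ (note $2\mathbf{p}\in\Z^r$, so this is a lattice point) --- splits each generator into two dipoles lying in $W_{\mathbf{p}}$ and $W_{\mathbf{p}+\frac{1}{2}\evec_j}$; collecting summands within each of the $r+1$ submodules, with the zero function filling any unused centre, caps the count at $r+1$ independently of $\Supp{f}$. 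I checked the details: both brackets are skew-symmetric about the claimed centres (a nondegenerate dipole vanishes at any lattice fixed point, and a degenerate bracket is the zero function, so the condition $2f(\mathbf{p})=0$ at fixed points is never violated), the telescoping is a finite sum, scaling by $f(\mathbf{a})\in R$ preserves skew-symmetry, and the hypothesis $\sum_{\x} f(\x)=0$ enters exactly once, as you say. As for what each approach buys: yours eliminates both the case analysis on $\mathbf{p}$ and the defect-hyperplane bookkeeping, is shorter, and visibly works with $R$ replaced by any abelian group (a generality the paper also claims for its proof, by remark); the paper's recursion, on the other hand, keeps all $r+1$ summands supported in the same box $\{-n,\ldots,n-1,n\}^r$ as $f$, whereas your dipoles spread support over reflections through $\mathbf{p}$ and over the lattice paths to the chosen basepoint $\mathbf{0}$ --- harmless for the application, since Lemma~\ref{lem:skew symmetric functions} and the palindrome constructions need only finite support, but worth noting if quantitative support control were ever wanted.
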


\begin{proof}
We will prove the  result when every entry in $\mathbf{p}$ is  $0$ or $\frac{1}{2}$.  This suffices as, if the result holds for a given $f$, then it holds for all its translates.  

Take an integer $n>0$ such that  $f$ is supported on $\{-n,\ldots, n-1, n\}^r$.   We will define functions $g,h : \Z^r \to R$, both supported on  $\{-n,\ldots, n-1, n\}^r$, such that $f=g+h$.  View $f$, $g$ and $h$ as functions $\Z^{r-1} \times \Z \to R$.  Let $\overline{\mathbf{p}}$ be the projection of $\mathbf{p}$   to the $\Z^{r-1}$--factor.   So $\mathbf{p}$ is $(\overline{\mathbf{p}},0)$ or  $(\overline{\mathbf{p}},\frac{1}{2})$.

First suppose $\mathbf{p}=(\overline{\mathbf{p}},0)$. For $\x \in \Z^{r-1}$, recursively define $g(\x,i)$ and $h(\x,i)$ for $i =1, \ldots, n$ and  $g(2\overline{\mathbf{p}} -\x,i)$ and $h(2\overline{\mathbf{p}}-\x,i)$ for $i =0, -1, \ldots, -n$ as follows.  First set $h(2\overline{\mathbf{p}}-\x,-n) 	 :=  0$.  Then, for $i=n, \ldots 1$, define
$$\begin{array}{lllll}
-g(\x,i)	& := 	& g(2\overline{\mathbf{p}}-\x,-i) 	& := & f(2\overline{\mathbf{p}}-\x,-i) - h(2\overline{\mathbf{p}}-\x,-i) \\
-h(\x,i) 	& := 	& h(2\overline{\mathbf{p}}-\x,-i+1) & := & -f(\x,i) + g(\x,i)
\end{array}$$
and
$$g(2\overline{\mathbf{p}}-\x,0) 	 :=  f(2\overline{\mathbf{p}}-\x,0) - h(2\overline{\mathbf{p}}-\x,0).$$

(In the case $r=1$, the $\x$ and $2\overline{\mathbf{p}}-\x$ terms are absent.)  As $g$ and $h$ are supported on  $\{-n,\ldots, n-1, n\}^r$, this defines them everywhere on $\Z^r$.  Let ${S  := \{   (\x,0)   \mid   \x \in \Z^{r-1}   \}}$. Observe that  $h$ is skew-symmetric about $(\overline{\mathbf{p}}, \frac{1}{2})=\mathbf{p}+\frac{1}{2}\mathbf{e}_r$ and that  $g$  is \emph{nearly} skew-symmetric about $(\overline{\mathbf{p}},0)=\mathbf{p}$, the condition only (possibly) failing on $S$. 

Suppose, on the other hand, $\mathbf{p}=(\overline{\mathbf{p}},\frac{1}{2})$.  Then define $S := \{ \, (\x,1) \, \mid \, \x \in \Z^{r-1} \, \}$ and define $g$ and $h$ similarly in such a way that  $h$ is again skew-symmetric about $(\overline{\mathbf{p}}, \frac{1}{2}) = \mathbf{p}$, but this time $g$  is skew-symmetric about  $(\overline{\mathbf{p}},1)=\mathbf{p}+\frac{1}{2}\mathbf{e}_r$ except possibly on $S$.   Explicitly, for $\x \in \Z^{r-1}$ 
first set $g(2\overline{\mathbf{p}}-\x,-n):=0$.  Then, for $i=n,\ldots,1$, define
$$\begin{array}{lllll}
-h(\x,i+1)	& := 	& h(2\overline{\mathbf{p}}-\x,-i) 	& := & f(2\overline{\mathbf{p}}-\x,-i) - g(2\overline{\mathbf{p}}-\x,-i) \\
-g(\x,i+1) 	& := 	& g(2\overline{\mathbf{p}}-\x,-i+1) & := &   - f(\x,i+1) + h(\x,i+1) \\
\end{array}$$ 
\text{and}
$$\begin{array}{lllll}
-h(\x,1) 	& := 	& h(2\overline{\mathbf{p}}-\x,0) 	& := & f(2\overline{\mathbf{p}}-\x,0) \textcolor{cyan}{-} g(2\overline{\mathbf{p}}-\x,0) \\
-g(\x,1) 			&   	&  	& := & -f(\x,1) + h(\x,1). \\
\end{array}$$ 

Suppose that  $r=1$.   Then  $\mathbf{p}$ is $0$ or $\frac{1}{2}$.  We claim that   in the former case $g$ is skew-symmetric  about $0$ and in the latter $g$ is skew-symmetric about $1$. This follows from the construction of $g$, which immediately gives $g(i)=-g(-i)$ when $\mathbf{p}=0$  and gives $g(i)=-g(-i+1)$ when $\mathbf{p}=\frac{1}{2}$, and the  calculations:
\begin{align*}
g(0)  =  f(0)-h(0) = f(0) +f(1)-g(1) = \cdots   =    \sum_{x \in \Z} f(x)=0  & \textrm{ \ when \ } \mathbf{p}=0, \\
g(1)  =   f(1)-h(1)  =  f(1) +f(0)-g(0) = \cdots = \sum_{x\in\Z} f(x)=0 & \textrm{ \ when \ }  \mathbf{p}=\frac{1}{2}.
\end{align*}
This gives the base case of an induction on $r$.  

Suppose $r>1$.  Redefine  $g$ on $S$ to be everywhere zero. When $\mathbf{p}=(\overline{\mathbf{p}},0)$, this will make $g$ skew-symmetric about $\mathbf{p}$ while $h$ is skew-symmetric about $\mathbf{p}+\frac{1}{2}\mathbf{e}_r$; and when $\mathbf{p}=(\overline{\mathbf{p}},\frac{1}{2})$, it makes $g$ skew-symmetric about $\mathbf{p}+\frac{1}{2}\mathbf{e}_r=(\overline{\mathbf{p}},1)$ while $h$ is skew-symmetric about $\mathbf{p}$. However, $f=g+h$ may now fail on $S$ (and only on $S$).
When $\mathbf{p}=(\overline{\mathbf{p}},0)$,
\begingroup \renewcommand{\arraystretch}{1.4}$$\begin{array}{lcl}
\sum\limits_{\x \in \Z^{r-1}} (f-g-h)(\x,0) & = &  \sum\limits_{\x \in \Z^{r}} (f-g-h)(\x)   \\
 & = &  \sum\limits_{\x \in \Z^{r}} f(\x) - \sum\limits_{\x \in \Z^{r}} g(\x) - \sum\limits_{\x \in \Z^{r}} h(\x) \\
 & = & 0- 0- 0
\end{array}
$$\endgroup 
since  $g$ and $h$ are skew-symmetric and, by hypothesis, $\sum_{\x \in \Z^{r}} f(\x)=0$. Similar calculations show $\sum_{\x \in \Z^{r-1}} (f-g-h)(\x,1) = 0$ when $\mathbf{p}=(\overline{\mathbf{p}},\frac{1}{2})$. Thus, by induction, $f-g-h$ can be expressed as a sum of  $r$   functions that are skew-symmetric about ${\mathbf{p}, \mathbf{p} + \frac{1}{2} \mathbf{e}_1, \ldots, \mathbf{p} + \frac{1}{2} \mathbf{e}_{r-1}}$.    So we have the result.
\end{proof}

In Section \ref{sec:palindromic width} we will need to  express a function $f:\Z^r \to R$ as a sum of functions which are skew-symmetric about $\mathbf{p}, \mathbf{p} +  \mathbf{e}_1, \ldots, \mathbf{p} +  \mathbf{e}_r$. To do so, we  invoke  hypotheses that are stronger than those for Lemma~\ref{lem:skew symmetric functions half}.

Consider the set of vectors $\mathcal{D} = \{\varepsilon_1 \evec_1 + \ldots + \varepsilon_r \evec_r \mid \varepsilon_i = 0,1   \}$ and the $2^r$ double-size grids $2\Z^r+\mathbf{v}$, one for each $\mathbf{v}\in \mathcal{D}$, which partition $\Z^r$.

\begin{lemma}\label{lem:skew symmetric functions}
For all $r \geq 1$ and all $\mathbf{p} \in \Z ^r $, every finitely supported function  $f:\Z^r \to R$ such that $\sum_{\mathbf{x} \in 2\Z^r+\vvec} f(\mathbf{x})=0$ for all $\vvec \in \mathcal{D}$ 
is the sum of $r+1$  finitely supported functions skew-symmetric  about $\mathbf{p}, \mathbf{p} +  \mathbf{e}_1, \ldots, \mathbf{p} +  \mathbf{e}_r$.
\end{lemma}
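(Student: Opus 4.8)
The plan is to reduce to Lemma~\ref{lem:skew symmetric functions half} by exploiting a parity/rescaling trick. The crucial observation is that, because $\mathbf{p}$ is now an \emph{integer} vector, each reflection $\x \mapsto 2(\mathbf{p}+\evec_j)-\x$ (with $\evec_0:=\mathbf{0}$) preserves each of the $2^r$ parity cosets $2\Z^r+\vvec$, $\vvec\in\mathcal{D}$. This means we may treat each coset entirely separately, and on each coset the half-integer spacing of Lemma~\ref{lem:skew symmetric functions half} becomes unit spacing once we rescale the coset to a full copy of $\Z^r$.

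Concretely, first I would, for each $\vvec\in\mathcal{D}$, define $f_\vvec:\Z^r\to R$ by $f_\vvec(\y)=f(2\y+\vvec)$. Each $f_\vvec$ is finitely supported, and the hypothesis $\sum_{\x\in 2\Z^r+\vvec}f(\x)=0$ says exactly that $\sum_{\y\in\Z^r}f_\vvec(\y)=0$, so Lemma~\ref{lem:skew symmetric functions half} applies to $f_\vvec$. I would apply it with base point $\mathbf{q}_\vvec:=\tfrac12(\mathbf{p}-\vvec)\in\tfrac12\Z^r$, obtaining $r+1$ finitely supported functions $g_{\vvec,0},\ldots,g_{\vvec,r}$ summing to $f_\vvec$ and skew-symmetric about $\mathbf{q}_\vvec,\mathbf{q}_\vvec+\tfrac12\evec_1,\ldots,\mathbf{q}_\vvec+\tfrac12\evec_r$. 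Pulling each back to the coset---defining $G_{\vvec,j}(\x):=g_{\vvec,j}\!\left(\tfrac12(\x-\vvec)\right)$ for $\x\in 2\Z^r+\vvec$ and $G_{\vvec,j}(\x):=0$ otherwise---the substitution $\x=2\y+\vvec$ turns skew-symmetry of $g_{\vvec,j}$ about $\mathbf{q}_\vvec+\tfrac12\evec_j$ into skew-symmetry of $G_{\vvec,j}$ about $2(\mathbf{q}_\vvec+\tfrac12\evec_j)+\vvec=\mathbf{p}+\evec_j$, \emph{independently of $\vvec$}. Extending by zero off the coset preserves this, since for $\x$ in the coset $2(\mathbf{p}+\evec_j)-\x$ lies in the same coset, while for $\x$ outside it both sides of the skew-symmetry relation vanish.

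The payoff of the choice of $\mathbf{q}_\vvec$ is that, for each fixed $j$, all of the functions $G_{\vvec,j}$ (over the $2^r$ choices of $\vvec$) are skew-symmetric about the \emph{same} integer point $\mathbf{p}+\evec_j$. Since a sum of functions skew-symmetric about a common point is again skew-symmetric about that point, I would set $H_j:=\sum_{\vvec\in\mathcal{D}}G_{\vvec,j}$ and conclude that $H_j$ is skew-symmetric about $\mathbf{p}+\evec_j$. As the cosets partition $\Z^r$ and $\sum_{j}g_{\vvec,j}=f_\vvec$, the double sum collapses to $\sum_{j=0}^r H_j=f$, giving the required decomposition into exactly $r+1$ skew-symmetric functions.

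The main point requiring care---and the step on which the whole argument hinges---is the interplay between the zero-extension and skew-symmetry: it is only because $\mathbf{p}\in\Z^r$ that each reflection in question preserves the parity cosets, so that extending $g_{\vvec,j}$ by zero off $2\Z^r+\vvec$ does not destroy skew-symmetry, and so that the $2^r$ contributions for a fixed $j$ can be amalgamated into a single $H_j$ without inflating the count beyond $r+1$. This is precisely where the strengthened hypothesis---vanishing of the sum on each coset rather than merely globally---is needed, since it is what permits Lemma~\ref{lem:skew symmetric functions half} to be invoked coset-by-coset in the first place.
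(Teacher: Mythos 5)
Your proposal is correct and matches the paper's own proof essentially step for step: both split $f$ over the $2^r$ parity cosets, rescale each coset via $\x \mapsto 2\y+\vvec$ so that Lemma~\ref{lem:skew symmetric functions half} applies with base point $\tfrac{1}{2}(\mathbf{p}-\vvec)$, pull the resulting skew-symmetric pieces back to the cosets, and amalgamate over $\vvec$ for each fixed index $j$ using that sums of functions skew-symmetric about a common point remain skew-symmetric. Your explicit verification that the zero-extension preserves skew-symmetry (because $\mathbf{p}\in\Z^r$ makes each reflection preserve the cosets) is a point the paper leaves implicit, but the argument is the same.
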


\begin{proof}
We can express $f$ as the sum 
 $$f (\x) \ = \  \sum\limits_{\vvec \in \mathcal{D}} f_\vvec(\x)$$
where $f_\vvec(\x)=f(\x)$ if $\x \in 2\Z^r+\vvec$ and $0$ otherwise. Let $\varphi_\vvec:\Z^r \to \Z^r$ be given by $\varphi_\vvec(\x)=2\x+\vvec$. For each $\vvec \in \mathcal{D}$ we can apply Lemma \ref{lem:skew symmetric functions half} to $f \circ \varphi_\vvec : \Z^r \to \Z^r$, writing each $f \circ \varphi_\vvec$ as the sum of $r+1$ functions:
		$$f \circ \varphi_\vvec \ = \  \overline{f}_{\vvec,0}+\ldots + \overline{f}_{\vvec,r}$$
where $\overline{f}_{\vvec,i}$ is skew-symmetric about $\frac{1}{2}\mathbf{p}-\frac{1}{2}\vvec+\frac{1}{2}\evec_i$ (taking $\evec_0$ to be the zero-vector). For each $\vvec \in \mathcal{D}$ and each $i \in \{0,\ldots,r\}$, we can write $\overline{f}_{\vvec,i} = f_{\vvec,i} \circ \varphi_\vvec$, where $f_{\vvec,i}$ has support contained in $2\Z^r + \vvec$ and is skew-symmetric about $\varphi_\vvec(\frac{1}{2}\mathbf{p}-\frac{1}{2}\vvec+\frac{1}{2}\evec_i)=\mathbf{p}+\evec_i$.  Note that $f_\vvec=f_{\vvec,0} + \ldots + f_{\vvec,r}$. Thus, since the sum of a family of functions which are all skew-symmetric about the same point will itself be skew-symmetric about that point,  $f$ is the sum of $r+1$ skew-symmetric functions about $\mathbf{p}, \mathbf{p} +  \mathbf{e}_1, \ldots, \mathbf{p} +  \mathbf{e}_r$.
\end{proof}

\subsection{The palindromic width of free metabelian groups}\label{sec:palindromic width}

 In the following, $F=F(x_1,\ldots , x_r)$ is the free group of rank $r$ and $[x_i,x_j]$ denotes $x_ix_jx_i^{-1}x_j^{-1}$. We view $x_1,\ldots , x_r$  as also  generating    $F/F'$ and   $F/F''$, and we identify   $x_i \in F/F' \cong \Z^r$  with the basis vector $\evec_i$.  Given a word $w$ on $x_1^{\pm 1}, \ldots,  x_r^{\pm 1}$, we will denote  the same word read backwards by $\overline{w}$. So $w$ is a palindrome if and only if $w$ is the same word as $\overline{w}$.

While, for the sake of conciseness, we do not use it here,  for visualizing the arguments in this section, we recommend the interpretation of elements of $F/F''$ as \emph{flows} on the Cayley graph of $\Z^r$---see \cite{DLS93,Salemagnus,SZ13,Vershikfreesolvable} for further details. 

The following lemma shows that considering $F'/F''$ suffices for obtaining an upper bound on the   palindromic width of $F/F''$.

\begin{lemma}\label{lem:reduce}
If every  $g \in F'/F''$ is the product of $\ell$ or fewer palindromic words on   $x_1^{\pm1},\ldots,x_r^{\pm1}$, then $$\PalWidth(F/F'',  \{x_1,\ldots,x_r\})  \ \leq \ \ell+r.$$  
\end{lemma}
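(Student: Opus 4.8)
The plan is to reduce an arbitrary element of $F/F''$ to one lying in the subgroup $F'/F''$ by peeling off its image in the abelianization $F/F' \cong \Z^r$, paying only $r$ palindromes to do so. The crucial elementary observation is that every power $x_i^{a}$ of a \emph{single} generator is already a palindromic word: reading $x_i^{a} = x_i x_i \cdots x_i$ (or its inverse, when $a<0$) backwards leaves it unchanged, so $\overline{x_i^{a}} = x_i^{a}$.

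Concretely, I would argue as follows. Let $\pi : F/F'' \to F/F' \cong \Z^r$ be the canonical projection, which identifies $x_i$ with $\evec_i$, and fix an arbitrary $g \in F/F''$. Writing $\pi(g) = (a_1, \ldots, a_r)$, set $p := x_1^{a_1} \cdots x_r^{a_r}$, regarded as an element of $F/F''$. Since $\pi$ is a homomorphism sending $x_i$ to $\evec_i$, we have $\pi(p) = \pi(g)$, and therefore $g p^{-1} \in \ker \pi = F'/F''$.

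Now I would combine the two ingredients. First, by hypothesis the element $g p^{-1} \in F'/F''$ is a product of at most $\ell$ palindromic words on $x_1^{\pm1},\ldots,x_r^{\pm1}$. Second, by the observation above, $p = x_1^{a_1}\cdots x_r^{a_r}$ is a product of exactly $r$ palindromes, one single-generator power for each index $i$ (some possibly trivial). Writing $g = (g p^{-1})\, p$ then exhibits $g$ as a product of at most $\ell + r$ palindromes, which is exactly the claimed bound $\PalWidth(F/F'',\{x_1,\ldots,x_r\}) \leq \ell + r$.

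There is essentially no obstacle here: the lemma is a clean coset decomposition whose content is entirely carried by the remark that single-generator powers are palindromic. The only point deserving a line of care is that the factorization places the $\Z^r$-part $p$ on the right of $g p^{-1}$, so that the $\ell$ palindromes supplied by the hypothesis for $g p^{-1}$ and the $r$ palindromes comprising $p$ simply concatenate, with no cancellation, reordering, or appeal to symmetry required.
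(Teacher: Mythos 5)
Your proof is correct and is essentially the paper's own argument: the paper's one-line proof identifies coset representatives for $F'/F''$ in $F/F''$ with $F/F' \cong \Z^r$, which has palindromic width $r$ since each $x_i^{a_i}$ is a palindrome, exactly the decomposition $g = (gp^{-1})p$ you spell out. You have simply made the implicit details explicit.
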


\begin{proof}  A set of coset representatives for $F'/F''$ in $F/F''$ can be identified with $(F/F'')/(F'/F'') \cong  F/F' \cong\Z^r$, which has palindromic width $r$ with respect to  $x_1,\ldots,x_r$.
\end{proof}

The group $F'/F''$ is the normal closure of  $Y := \{ \, [x_i,x_j]  \,  \mid \, 1 \leq i < j \leq r \, \}$ in $F/F''$. Enumerate  $Y$ as $Y=\{\rho_1,\ldots , \rho_m\}$, where $m = r(r+1)/2$. For all $h \in F'/F''$, there exist finitely supported functions $f_1, \ldots, f_m:F/F' \to \Z$ such that
		\begin{equation} \label{eq:product of relators} h \ = \  \prod\limits_{u \in F/F'} u \rho_1^{f_1{(u)}}\ldots  \rho_m^{f_m{(u)}} u^{-1}.\end{equation}

\begin{rem} \label{rem}
\textup{The reason  the product here is expressed as being over $F/F'$ is that if words $u$ and $v$  on   $x_1^{\pm1},\ldots,x_r^{\pm1}$  are equal in $F/F'$, then $u \rho u^{-1} = v \rho v^{-1}$ in $F/F''$ for all $\rho \in Y$ as elements of $F'$ commute modulo  $F''$.  For the same reason, the order in which the product is evaluated does not effect  the  element of $F'/F''$ it represents.}
\end{rem}

\begin{lemma}\label{lem:skew symmetric gives palindrome}
For $k=1, \ldots, m$ and $\rho_k=[x_i,x_j]$, if $f_k$ is skew-symmetric about ${-\frac{1}{2}(\evec_i+\evec_j)}$, then $h$ can be represented by a  palindrome on $x_1^{\pm 1}, \ldots,  x_r^{\pm 1}$.
\end{lemma}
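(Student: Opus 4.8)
The plan is to work in the natural $\Z[\Z^r]$-module structure on $F'/F''$ and to build the required palindrome by a single journey through the Cayley graph of $\Z^r$, in the spirit of Section~\ref{ZwrZ}. Writing the conjugation action of $F/F'\cong\Z^r$ multiplicatively, the conjugate $u[x_i,x_j]u^{-1}$ represents $t^{\mathbf u}[x_i,x_j]$, where $\mathbf u$ is the image of $u$ in $\Z^r$ and $t^{\mathbf u}=x_1^{u_1}\cdots x_r^{u_r}$; so, writing $F'/F''$ additively, \eqref{eq:product of relators} becomes $h=\sum_{k}\big(\sum_{\mathbf u}f_k(\mathbf u)\,t^{\mathbf u}\big)\rho_k$.

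The algebraic fact driving everything is that \emph{reversing} a commutator negates and translates it. A direct computation in $F'/F''$ (via Fox derivatives, or from $[x_j^{-1},x_i^{-1}]=-[x_i^{-1},x_j^{-1}]$ and $[x_i^{-1},x_j^{-1}]=t_i^{-1}t_j^{-1}[x_i,x_j]$) shows that $\overline{[x_i,x_j]}=[x_j^{-1},x_i^{-1}]$ represents $-t_i^{-1}t_j^{-1}[x_i,x_j]$. Thus reversing the letters of $[x_i,x_j]$ negates it and shifts it by $-\evec_i-\evec_j$, which is exactly the reflection occurring in skew-symmetry about $-\tfrac12(\evec_i+\evec_j)$. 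In particular the genuine palindrome $[x_i,x_j]\overline{[x_i,x_j]}$ represents the basic atom $\delta_{\mathbf 0}-\delta_{-\evec_i-\evec_j}$.

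Next I would set up the journey. Choose $n$ so large that every $f_k$ is supported on $\{-n,\ldots,n\}^r$ and, exactly as for the word $u_r$ of Section~\ref{ZwrZ}, take a path whose sequence of steps is a palindrome and which snakes once through $\{-n,\ldots,n\}^r$, beginning and ending at $\mathbf 0$. Because the total displacement is $\mathbf 0$, the point visited at the position mirroring a first-half visit to $\x$ is $-\x$. Since each $\rho_k\in F'$, inserting a word $W$ in the $\rho_k^{\pm1}$ while the journey stands at a point is a loop and leaves the path unchanged, while the palindrome forces $\overline W$ at the mirror moment. Hence inserting $[x_i,x_j]$ at a first-half point $\x$ contributes $t^{\x}[x_i,x_j]$ and forces $-t^{-\x-\evec_i-\evec_j}[x_i,x_j]$ at $-\x$, whose sum is the skew-symmetric atom $\delta_\x-\delta_{-(\evec_i+\evec_j)-\x}$; inserting $\overline{[x_i,x_j]}$ instead yields the atom centred at $-\x$.

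The remaining step is bookkeeping. Since each $f_k$ is skew-symmetric it is an integer combination of atoms $\delta_\y-\delta_{-(\evec_i+\evec_j)-\y}$, and the first half of the journey meets exactly one of $\{\y,-\y\}$ for every $\y$ (with $\mathbf 0$ at its centre), so each atom can be produced at the appropriate first-half point by inserting $[x_i,x_j]$ or $\overline{[x_i,x_j]}$ the requisite number of times; doing this simultaneously for all $k$ at each point (legitimate since $F'/F''$ is abelian) yields one palindrome representing $h$. The crux I expect is precisely this matching: the journey's mirror is the reflection $\x\mapsto-\x$, whereas skew-symmetry reflects in $-\tfrac12(\evec_i+\evec_j)$, and the two differ by the shift $\evec_i+\evec_j$. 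This is reconciled by the reversal identity above and by using \emph{both} $[x_i,x_j]$ and its reverse as gadgets, so that from the first-half representatives alone every atom becomes available; mild extra care is needed at the central point $\mathbf 0$, where the inserted word must itself be a palindrome and the atom $\delta_{\mathbf 0}-\delta_{-\evec_i-\evec_j}$ is supplied by $[x_i,x_j]\overline{[x_i,x_j]}$.
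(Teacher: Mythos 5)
Your mirror-insertion mechanism is sound, and the algebraic engine you identify --- that reversing $[x_i,x_j]$ negates it and shifts it by $-\evec_i-\evec_j$, matching the skew-symmetry centre $-\frac{1}{2}(\evec_i+\evec_j)$ --- is exactly the fact the paper uses (there in the form $x_j^{-1}x_i^{-1}\rho_k x_ix_j=\overline{\rho_k^{-1}}$). But there is a genuine gap: you treat the bare carrier journey as contributing nothing. That is correct in $G\wr\Z^r$, where a word in the $x$'s alone leaves all lamps untouched, but it is false in $F/F''$: a closed word on $x_1^{\pm1},\ldots,x_r^{\pm1}$ represents a generally \emph{nontrivial} element of $F'/F''$ (its ``flow'' or enclosed area). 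Note first that a palindromic word of zero total displacement must have even length $2\ell$ with its midpoint at $\mathbf{0}$, so your journey is forced to have the form $J=W\overline{W}$ with $W$ a loop at the origin; and such words are typically nontrivial. For instance, with $W=x_1x_2x_1^{-1}x_2^{-1}$ the palindrome $J=W\overline{W}=x_1x_2x_1^{-1}x_2^{-1}\,x_2^{-1}x_1^{-1}x_2x_1$ equals $(1-t_1^{-1}t_2^{-1})[x_1,x_2]\neq 0$ in $F'/F''$. Since inserting loops at mirrored moments adds their conjugated values to $[J]$, your final palindrome represents $h+[J]$, not $h$, and nothing in your argument controls or even mentions $[J]$.

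The gap is repairable, but the repair is a real step that your write-up does not contain. If $[W]=\sum_k g_k\rho_k$ with $g_k\in\Z[t_1^{\pm1},\ldots,t_r^{\pm1}]$, then $[J]=[W\overline{W}]=\sum_k\bigl(g_k-t_i^{-1}t_j^{-1}\bar{g}_k\bigr)\rho_k$ where $\bar{g}_k(t)=g_k(t^{-1})$, so each coefficient of $[J]$ is itself skew-symmetric about the correct centre; one can therefore retarget the insertions at $h-[J]$, checking that the atoms needed for the (slightly enlarged) support of $[J]$ are still reachable since $\delta_{\y}-\delta_{-\y-\evec_i-\evec_j}=-(\delta_{\y'}-\delta_{-\y'-\evec_i-\evec_j})$ for $\y'=-\y-\evec_i-\evec_j$. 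The paper sidesteps the issue entirely by dispensing with a single snaking journey: it builds the palindrome as $g_m\cdots g_1\overline{g_1}\cdots\overline{g_m}$, where each $g_k$ is a product of exact conjugates $u\,\rho_k^{f_k(u)}u^{-1}$ over representatives $\uvec$ of the pairs $\{\uvec,\,-\uvec-\evec_i-\evec_j\}$ in $\Supp{f_k}$; since $\overline{u^{-1}}\,\overline{\rho_k^{f_k(u)}}\,\overline{u}$ is again an exact conjugate, equal in $F'/F''$ to the term of \eqref{eq:product of relators} at the reflected point, the word evaluates to exactly $h$ with no residual loop term. (A further small inaccuracy: your first half, being a loop at $\mathbf{0}$, cannot meet \emph{exactly} one of each pair $\{\y,-\y\}$; ``at least one'' is what you need and is achievable, and suffices for the bookkeeping.)
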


\begin{proof}
For $k=1, \ldots, m$, let
\begin{equation} \label{hk}
h_k \ := \ \prod\limits_{u \in F/F'} u \rho_k^{f_k{(u)}} u^{-1}.
\end{equation}
Suppose that $\rho_k=[x_i,x_j]$ and  that $f_k$ is skew-symmetric about $-\frac{1}{2}(\evec_i+\evec_j)$. The support of $f_k$  consists of pairs of elements $\uvec$ and $-\uvec-\evec_i-\evec_j$. Enumerate these   so that
		$$\Supp{f_k} \ =\ \{\uvec_1, -\uvec_1-\evec_i-\evec_j, \ldots , \uvec_n\ , -\uvec_n-\evec_i-\evec_j\}.$$
Suppose $u$ and $v$ are words on $x_1^{\pm 1}, \ldots,  x_r^{\pm 1}$ representing $\mathbf{u}$ and $-\mathbf{u} - \mathbf{e}_i -\mathbf{e}_j$, respectively.  Then $v^{-1} = x_i x_j \bar{u}$  and $v = \overline{u^{-1}}x_j^{-1}x_i^{-1}$ in $F/F'$ and  $f_k(v) = -f_k(u)$.  So (see Remark~\ref{rem})
$$v \rho_k^{f_k(v)} v^{-1}   \ = \ 	\overline{u^{-1}}x_j^{-1}x_i^{-1} \rho_k^{-f_k(u)} x_ix_j\overline{u} $$ in $F'/F''$.	Thus, if $u_i$ is a word on $x_1^{\pm 1}, \ldots,  x_r^{\pm 1}$ representing $\uvec_i$, then 
		$$\begin{array}{ll}	
		\Big(u_1 \rho_k^{f_k(u_1)} u_1^{-1}\Big) \ldots \Big(u_n \rho_k^{f_k(u_n)} u_n^{-1}\Big) \Big(\overline{u_n^{-1}}x_j^{-1}x_i^{-1} \rho_k^{-f_k(u_n)} x_ix_j\overline{u_n}\Big)\ldots
		\\
		\qquad  \ldots  \Big(\overline{u_1^{-1}}x_j^{-1}x_i^{-1} \rho_k^{-f_k(u_1)} x_ix_j\overline{u_1}\Big)
		\end{array}$$
represents $h_k$ in $F'/F''$.  
But then, as  $$x_j^{-1}x_i^{-1} \rho_k x_ix_j \ =  \ x_j^{-1}x_i^{-1} \   x_i x_j x_i^{-1} x_j^{-1}   \  x_ix_j  \ = \  x_i^{-1} x_j^{-1}  x_ix_j   \ = \  \overline{\rho_k^{-1}}$$ in $F$ (and so in $F'/F''$),  the palindrome
		$$\Big(u_1 \rho_k^{f_k(u_1)} u_1^{-1}\Big) \ldots \Big(u_n \rho_k^{f_k(u_n)} u_n^{-1}\Big) \Big(\overline{u_n^{-1}} \overline{\rho_k^{f_k(u_n)}}\overline{u_n}\Big) \ldots  \Big(\overline{u_1^{-1}}\overline{\rho_k^{f_k(u_1)}}\overline{u_1}\Big)$$ represents $h_k$  in $F'/F''$.  Let $g_k=\left(u_1 \rho_k^{f_k(u_1)} u_1^{-1}\right) \ldots \left(u_n \rho_k^{f_k(u_n)} u_n^{-1}\right)$. Then, since conjugates of commutators commute in $F' / F''$, the  palindrome
		$$g_m\ldots g_1 \overline{g_1} \ldots \overline{g_m}$$
represents $h$ in $F'/F''$. 
\end{proof}

\begin{cor} \label{cor:conjugate}
If there exists $\mathbf{p} \in \Z^r$ such that each $f_k$ is skew-symmetric about $\mathbf{p}-\frac{1}{2}(\evec_i+\evec_j)$ for each $k$, then $\mathbf{p}^{-1}h\mathbf{p}$ can be represented by a palindrome. 
\end{cor}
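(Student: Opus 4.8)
The plan is to deduce the corollary from Lemma~\ref{lem:skew symmetric gives palindrome} by conjugating the expression \eqref{eq:product of relators} and observing that conjugation by $\mathbf{p}$ merely translates the defining functions $f_k$, which in turn shifts the centre of their skew-symmetry. First I would fix a word $w$ on $x_1^{\pm1},\ldots,x_r^{\pm1}$ representing $\mathbf{p}$ in $F/F'\cong\Z^r$, and interpret $\mathbf{p}^{-1}h\mathbf{p}$ as $w^{-1}hw$ in $F'/F''$. This is legitimate: the conjugation action of $F/F''$ on the abelian normal subgroup $F'/F''$ factors through $F/F'$, so $w^{-1}hw$ depends only on the image $\mathbf{p}$ of $w$ and not on the chosen lift.

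Next I would conjugate \eqref{eq:product of relators} factor by factor. Since each term is a conjugate $u\,\rho_1^{f_1(u)}\cdots\rho_m^{f_m(u)}\,u^{-1}$, inserting $ww^{-1}$ between consecutive terms turns the $k$-th factor into $(w^{-1}u)\,\rho_1^{f_1(u)}\cdots\rho_m^{f_m(u)}\,(w^{-1}u)^{-1}$, where $w^{-1}u$ represents $\mathbf{u}-\mathbf{p}$. Reindexing the product over $u'=w^{-1}u$, and using Remark~\ref{rem} to disregard both the order of the factors and the choice of word representing each element of $F/F'$, I obtain
$$\mathbf{p}^{-1}h\mathbf{p} \ = \ \prod_{u'\in F/F'} u'\,\rho_1^{f_1'(u')}\cdots\rho_m^{f_m'(u')}\,u'^{-1}, \qquad f_k'(\x):=f_k(\x+\mathbf{p}).$$
Thus $\mathbf{p}^{-1}h\mathbf{p}$ has exactly the shape \eqref{eq:product of relators}, now governed by the translated functions $f_k'$.

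The crux is a one-line check that translation shifts the centre of skew-symmetry. If $\rho_k=[x_i,x_j]$ and $f_k$ is skew-symmetric about $\mathbf{p}-\frac12(\evec_i+\evec_j)$, then $f_k(\x)=-f_k(2\mathbf{p}-(\evec_i+\evec_j)-\x)$; substituting $\x+\mathbf{p}$ for $\x$ gives $f_k(\x+\mathbf{p})=-f_k(\mathbf{p}-(\evec_i+\evec_j)-\x)$, which is precisely $-f_k'(-(\evec_i+\evec_j)-\x)$. Hence $f_k'$ is skew-symmetric about $-\frac12(\evec_i+\evec_j)$, the centre demanded by Lemma~\ref{lem:skew symmetric gives palindrome}. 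Applying that lemma to the displayed expression then yields a palindrome on $x_1^{\pm1},\ldots,x_r^{\pm1}$ representing $\mathbf{p}^{-1}h\mathbf{p}$, as required.

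I do not expect a genuine obstacle here, since the argument is essentially a change of variables. The only points demanding care are confirming that conjugation by $\mathbf{p}$ is well-defined on $F'/F''$ (independence of the lift $w$, which follows from $F'/F''$ being abelian and normal) and that the reindexing respects Remark~\ref{rem}; both are routine once stated explicitly.
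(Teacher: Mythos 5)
Your proposal is correct and is exactly the argument the paper intends: the corollary is stated without proof as an immediate consequence of Lemma~\ref{lem:skew symmetric gives palindrome}, the implicit reasoning being precisely your change of variables --- conjugation by $\mathbf{p}$ translates each $f_k$ to $f_k'(\x)=f_k(\x+\mathbf{p})$, shifting the centre of skew-symmetry to $-\frac{1}{2}(\evec_i+\evec_j)$. Your explicit checks (well-definedness of conjugation via $F'/F''$ being abelian, and the reindexing via Remark~\ref{rem}) are the right points to verify and both go through as you state.
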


Next,  as $F/F' \cong \Z^r$, given suitable conditions on each of the functions $f_k$, we will be able to use   Lemmas~\ref{lem:skew symmetric functions} and \ref{lem:skew symmetric gives palindrome} to reorder the product \eqref{eq:product of relators} representing $h$ in $F'/F''$,  to express $h$ as a product of boundedly many palindromes. Recall that $\mathcal{D} = \{\varepsilon_1 \evec_1 + \ldots + \varepsilon_r \evec_r \mid \varepsilon_i = 0,1  \}$.

\begin{lemma}\label{lem:palindromic length when delta sums zero}
Suppose $h \in F' / F''$ is such that $\sum\limits_{\x \in 2\Z^r+\vvec}f_k(\x)=0$ for all $\vvec \in \mathcal{D}$ and all $k$. Then in $F' / F''$, $h$ is a product of $3r+1$ or fewer  palindromes on $x_1^{\pm 1}, \ldots,  x_r^{\pm 1}$.
\end{lemma}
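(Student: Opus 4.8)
The plan is to split $h$ as a product $h = h^{(0)}h^{(1)}\cdots h^{(r)}$ of $r+1$ factors, each of which is (conjugate to) a product of few palindromes, by decomposing every $f_k$ into $r+1$ skew-symmetric summands. Fix $k$ and write $\rho_k = [x_i,x_j]$ (so $i,j$ depend on $k$). Using the hypothesis $\sum_{\x \in 2\Z^r + \vvec} f_k(\x) = 0$ for all $\vvec \in \mathcal{D}$, I would write $f_k = f_k^{(0)} + f_k^{(1)} + \cdots + f_k^{(r)}$, where $f_k^{(s)}$ is skew-symmetric about $\evec_s - \frac12(\evec_i+\evec_j)$, taking $\evec_0 := \mathbf{0}$. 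This is the conclusion of Lemma~\ref{lem:skew symmetric functions}, but with every centre translated by the fixed half-integer vector $-\frac12(\evec_i+\evec_j)$; I return to this point below. Setting $h^{(s)} := \prod_{u \in F/F'} u \bigl(\prod_k \rho_k^{f_k^{(s)}(u)}\bigr) u^{-1}$ and using that conjugates of commutators commute in $F'/F''$ together with $f_k = \sum_s f_k^{(s)}$, the product \eqref{eq:product of relators} reorganises into $h = h^{(0)}h^{(1)}\cdots h^{(r)}$.

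Each $h^{(s)}$ is then handled by the results of this subsection. For $s=0$ the centre is $-\frac12(\evec_i+\evec_j)$ for every $k$, so Lemma~\ref{lem:skew symmetric gives palindrome} represents $h^{(0)}$ by a single palindrome. For $s = 1,\ldots,r$, every $f_k^{(s)}$ is skew-symmetric about $\evec_s - \frac12(\evec_i+\evec_j)$, so Corollary~\ref{cor:conjugate} with $\mathbf{p} = \evec_s$ shows that $x_s^{-1} h^{(s)} x_s$ is a palindrome $w_s$; hence $h^{(s)} = x_s w_s x_s^{-1}$ is a product of three palindromes, since $x_s$ and $x_s^{-1}$ are themselves palindromes. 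Multiplying these expressions gives $h$ as a product of $1 + 3r = 3r+1$ palindromes, as required.

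The main obstacle is the decomposition asserted in the first paragraph. One cannot simply quote Lemma~\ref{lem:skew symmetric functions}: its centres lie in $\Z^r$, whereas the centres I need lie in the coset of $\Z^r$ whose $i$-th and $j$-th coordinates are half-integers, and no integer translation carries one coset onto the other. Instead I would re-run the inductive construction underlying Lemmas~\ref{lem:skew symmetric functions half} and \ref{lem:skew symmetric functions}, carrying the fixed offset $-\frac12(\evec_i+\evec_j)$ throughout. The reflections defining the required skew-symmetries are here fixed-point-free, since the centre is not a lattice point, so there is no parity constraint of the form $2f(\mathbf{p})=0$ to satisfy at a centre; and the telescoping sums that must vanish for the recursion to close are precisely the coset sums $\sum_{\x \in 2\Z^r + \vvec} f_k(\x)$, which are zero by hypothesis. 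Verifying that this shifted recursion indeed closes up and produces summands skew-symmetric about $\evec_0 - \frac12(\evec_i+\evec_j), \ldots, \evec_r - \frac12(\evec_i+\evec_j)$ is the one genuinely technical step; once it is in hand, the palindrome bookkeeping above is routine.
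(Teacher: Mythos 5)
Your skeleton coincides with the paper's own proof of this lemma: decompose each $f_k$ into $r+1$ finitely supported summands $f_k^{(\alpha)}$ skew-symmetric about $\evec_\alpha-\frac{1}{2}(\evec_i+\evec_j)$ for $\alpha=0,\ldots,r$ (with $\evec_0=\mathbf{0}$), regroup the product \eqref{eq:product of relators} into $h=h^{(0)}\cdots h^{(r)}$ using the commutativity of conjugates of commutators modulo $F''$ (Remark~\ref{rem}), represent $h^{(0)}$ by a single palindrome via Lemma~\ref{lem:skew symmetric gives palindrome}, and write $h^{(\alpha)}=x_\alpha p^{(\alpha)}x_\alpha^{-1}$ via Corollary~\ref{cor:conjugate} with $\mathbf{p}=\evec_\alpha$, giving the count $1+3r$. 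All of this matches the paper's argument step for step, including the final bookkeeping.

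Where you diverge is the one point of genuine interest, and your scruple is well founded. The paper obtains the decomposition simply by citing Lemma~\ref{lem:skew symmetric functions} --- but with base point $\mathbf{p}=-\frac{1}{2}(\evec_i+\evec_j)$, which violates that lemma's stated hypothesis $\mathbf{p}\in\Z^r$, exactly as you say: no integer translation carries the integer-centred family onto the half-integer-centred one (indeed skew-symmetry about a lattice point imposes the constraint $2f(\mathbf{p})=0$, which the fixed-point-free reflections here do not), and the lemma's own proof does not extend verbatim, since the doubling map $\varphi_\vvec$ would require applying Lemma~\ref{lem:skew symmetric functions half} about $\frac{1}{2}(\mathbf{p}-\vvec)$, which lies in $\frac{1}{4}\Z^r$ but not $\frac{1}{2}\Z^r$, and the reflections $\x\mapsto 2\mathbf{p}+2\evec_\alpha-\x$ swap the cosets $2\Z^r+\vvec$ and $2\Z^r+\vvec+\evec_i+\evec_j$ rather than preserving them. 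So some adaptation --- your plan of re-running the bounce recursion carrying the fixed offset, with the coset sums $\sum_{\x\in 2\Z^r+\vvec}f_k(\x)=0$ supplying the conditions that let the recursion close --- really is needed. The criticism of your write-up is that you declare this ``the one genuinely technical step'' without executing it, so as a standalone proof it has a hole precisely at the decomposition; but this is the same hole the paper conceals behind its citation, and your diagnosis (fixed-point-free reflections, hence no parity obstruction; coset sums as the telescoping obstructions) is correct in outline. Carrying out the shifted recursion, or proving the half-integer-base variant of Lemma~\ref{lem:skew symmetric functions} by treating the paired cosets together, would complete a proof essentially identical to a corrected version of the paper's.
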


\begin{proof}
Let  $h_k$ be as in   \eqref{hk}. By Lemma \ref{lem:skew symmetric functions}, $f_k$ is the sum of $r+1$ skew-symmetric functions, $f_k=f_k^{(0)}+\ldots + f_k^{(r)}$, where $f_k^{(0)}$ is skew-symmetric about $-\frac{1}{2}(\evec_i+\evec_j)$ and $f_k^{(\alpha)}$ is skew-symmetric about $-\frac{1}{2}(\evec_i+\evec_j) + \evec_\alpha$ for   $\alpha=1,\ldots ,r$. Let  
		$$h^{(\alpha)} :=  \prod\limits_{u \in F/F'} u \rho_1^{f_1^{(\alpha)}{(u)}} \ldots \rho_m^{f_m^{(\alpha)}{(u)}} u^{-1}.$$
Then $h = h^{(0)} \cdots h^{(r)}$ in $F'/F''$.  

Let  $x_0$ denote the identity and $\mathbf{e}_0$ the zero-vector.  
For $\alpha=0, \ldots , r$,  by Corollary~\ref{cor:conjugate},
$h^{(\alpha)}=x_\alpha p^{(\alpha)} x_\alpha^{-1}$ in $F'/F''$ for some palindrome $p^{(\alpha)}$,
since $f_k^{(\alpha)}$ is skew-symmetric about $\evec_\alpha-\frac{1}{2}(\evec_i+\evec_j)$.   Then $$ p^{(0)} \left(x_1p^{(1)}x_1^{-1} \right)  \ldots \left( x_rp^{(r)}x_r^{-1}\right)$$ represents $h$  in $F'/F''$ and is the product of $3r+1$ palindromes.
\end{proof}

Next we prove a version of Lemma~\ref{lem:palindromic length when delta sums zero} free of the hypotheses on $f_k$.

\begin{lemma}\label{lem:palindromic length in F'/F''}
Every  $h \in F'/F''$ is a product of  $$2^{r-1}r(r+1)(2r+3)+3r+1$$ or fewer palindromes on $x_1^{\pm 1}, \ldots,  x_r^{\pm 1}$.  
\end{lemma}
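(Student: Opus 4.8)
The plan is to reduce the general case to Lemma~\ref{lem:palindromic length when delta sums zero} by repairing, one at a time, the finitely many ``grid-sum defects'' that obstruct its hypothesis, each repair costing only a bounded number of palindromes. First I would fix $h\in F'/F''$ and choose functions $f_1,\dots,f_m$ as in \eqref{eq:product of relators}. For each generator $\rho_k=[x_i,x_j]$ and each grid $\vvec\in\mathcal{D}$ set $c_{k,\vvec}:=\sum_{\x\in 2\Z^r+\vvec}f_k(\x)$. These $2^r m$ integers are precisely what prevents the immediate application of Lemma~\ref{lem:palindromic length when delta sums zero}.

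The idea is to multiply $h$ on the right by a ``corrector'' $D_{k,\vvec}\in F'/F''$ for each pair $(k,\vvec)$, so that after all correctors are applied every grid-sum of every $f_k$ vanishes. Since the representative $\vvec\in\{0,1\}^r$ itself lies in the grid $2\Z^r+\vvec$, the natural choice is $D_{k,\vvec}:=\vvec\,\rho_k^{-c_{k,\vvec}}\,\vvec^{-1}$: its only nontrivial function is $f_k$, which takes the single value $-c_{k,\vvec}$ at the point $\vvec$, and by Remark~\ref{rem} right-multiplication by $D_{k,\vvec}$ alters only the grid-$\vvec$ sum of $f_k$, lowering it by exactly $c_{k,\vvec}$. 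Thus $h':=h\prod_{k,\vvec}D_{k,\vvec}$ satisfies the hypothesis of Lemma~\ref{lem:palindromic length when delta sums zero} and so is a product of $3r+1$ palindromes, while $h=h'\prod_{k,\vvec}D_{k,\vvec}^{-1}$ is recovered using the inverses $D_{k,\vvec}^{-1}$, which are equally short because the inverse of a palindrome is again a palindrome. Counting the $2^r m$ correctors at palindromic length $\le 2r+3$ each, plus the final $3r+1$, and using $2^r m=2^{r-1}r(r+1)$, gives the stated bound.

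It then remains to bound the palindromic length of a corrector. Writing the grid representative as a word $\vvec=x_{i_1}\cdots x_{i_s}$ on distinct generators, with $s\le r$, the factors $\vvec$ and $\vvec^{-1}$ are each products of at most $r$ single-letter palindromes, so the whole task reduces to the key claim that $\rho_k^{\,c}=[x_i,x_j]^c$ is a product of at most three palindromes on $x_1^{\pm1},\dots,x_r^{\pm1}$ in $F/F''$, for every $c\in\Z$. For $c=1$ this is the identity
$$[x_i,x_j]\ =\ x_i\,\bigl(x_j\,x_i^{-1}\,x_j\bigr)\,x_j^{-2},$$
a product of the three palindromes $x_i$, $x_jx_i^{-1}x_j$ and $x_j^{-2}$ (valid already in $F$). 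For general $c$ I would seek an analogous three-palindrome expression in $F/F''$, exploiting that conjugates of commutators commute modulo $F''$ and that the identity $x_j^{-1}x_i^{-1}\rho_k x_ix_j=\overline{\rho_k^{-1}}$ from the proof of Lemma~\ref{lem:skew symmetric gives palindrome} lets one fold a word against its reverse.

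The hard part will be exactly this last step. Unlike the elements treated by Lemmas~\ref{lem:skew symmetric functions} and \ref{lem:skew symmetric gives palindrome}, the flow $c\,\Delta_{\mathbf 0}$ of $\rho_k^{\,c}$ has nonzero total, so it is \emph{not} skew-symmetric about any integer point; consequently no product of palindromes lying in $F'$ can represent it, since such products always have vanishing grid-sums. One is therefore forced to use palindromes with nontrivial image in $F/F'\cong\Z^r$, whose abelianized endpoints must cancel in the product while their flows accumulate to a single charge of height $c$. Arranging three such palindromes is where the central symmetry of a palindromic path about its midpoint, the reflection $\mathbf v\mapsto\mathbf m-\evec_i-\evec_j-\mathbf v$ carrying the flow to its negative, must be used with care, since the naive generalization of the $c=1$ identity already fails in $F$ for $c\ge2$.
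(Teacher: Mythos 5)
Your global strategy coincides with the paper's: express $h$ via \eqref{eq:product of relators}, correct the $2^r m$ grid sums $c_{k,\vvec}=\sum_{\x\in 2\Z^r+\vvec}f_k(\x)$ by right-multiplying by elements supported on a single grid (which, by Remark~\ref{rem}, is indeed all a corrector affects), apply Lemma~\ref{lem:palindromic length when delta sums zero} to get $3r+1$ palindromes, and count $2^r m=2^{-1}\cdot 2^{r}r(r+1)=2^{r-1}r(r+1)$ correctors at $2r+3$ palindromes each. But the argument is incomplete at precisely the step you yourself flag as ``the hard part'': you must show each corrector has palindromic length at most $2r+3$ \emph{uniformly in the defect} $c_{k,\vvec}$, and your single-point corrector $\vvec\,\rho_k^{-c}\,\vvec^{-1}$ comes with no such bound. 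Your three-palindrome identity handles only $c=1$ and, as you concede, does not generalize; your final paragraph is a heuristic explanation of why the generalization is obstructed (the flow of $\rho_k^{\,c}$ has nonzero total, so it is not skew-symmetric about any point), not an argument that closes the gap. As written, the lemma has been reduced to an unproven claim about $\rho_k^{\,c}$, one which this paper never needs or establishes.

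The missing idea is that the hypothesis of Lemma~\ref{lem:palindromic length when delta sums zero} constrains only the \emph{sum} of $f_k$ over each grid $2\Z^r+\vvec$, so there is no need to concentrate the entire charge $-c_{k,\vvec}$ at the single point $\vvec$: one may spread it over $c_{k,\vvec}$ distinct points of the same grid, and that freedom is exactly what makes a uniform palindromic bound possible. The paper exploits it via the ``battlement'' word
$$q_{k,\vvec} \ = \ x_1^{\varepsilon_1}\cdots x_r^{\varepsilon_r}\,(x_jx_ix_j^{-1}x_i)^{D}x_i^{-2D}\,x_r^{-\varepsilon_r}\cdots x_1^{-\varepsilon_1}, \qquad D=D_{k,\vvec},$$
which in $F$ equals $\vvec\,\bigl(\rho_k^{-1}(x_i^{2}\rho_k^{-1}x_i^{-2})\cdots(x_i^{2D-2}\rho_k^{-1}x_i^{-(2D-2)})\bigr)\,\vvec^{-1}$, a product of $D$ conjugates of $\rho_k^{-1}$ by elements of $2\Z^r+\vvec$, each lowering the grid-$\vvec$ sum of $f_k$ by $1$. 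Unlike $\rho_k^{-c}$, the middle word $(x_jx_ix_j^{-1}x_i)^{D}x_i^{-2D}$ is visibly a product of three palindromes for \emph{every} $D$, namely $x_j$, the alternating word $x_ix_j^{-1}x_ix_j\cdots x_jx_ix_j^{-1}x_i$, and $x_i^{-2D}$; together with the $2r$ single-letter palindromes of the conjugator this gives the $2r+3$ you budgeted. Substituting these battlement words for your correctors, your counting goes through verbatim and yields $2^{r-1}r(r+1)(2r+3)+3r+1$; without this substitution (or an actual proof of your three-palindrome claim for $\rho_k^{\,c}$), the proposal does not prove the lemma.
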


\begin{proof}
For $k=1,\ldots , m$ and $\mathbf{v}=\varepsilon_1 \evec_1 + \ldots + \varepsilon_r \evec_r   \in \mathcal{D}$, let $D_{k,\vvec}:=\sum\limits_{\x \in 2\Z^r+\vvec}f_k(\x)$ and, if $\rho_k = [x_i,x_j]$, define the \emph{battlement} words  
$$q_{k,\vvec} \ := \  x_1^{\varepsilon_1} \ldots x_r^{\varepsilon_r} \ (x_jx_ix_j^{-1}x_i)^{D_{k,\vvec}}x_i^{-2D_{k,\vvec}} \ x_r^{-\varepsilon_r}\ldots x_1^{-\varepsilon_1}.$$
Each $q_{k,\vvec}$ is the product of at most $2r+3$ palindromes. Indeed, for  $D \in \N$, $(x_jx_ix_j^{-1}x_i)^D$ is the product of two palindromes: $x_j$ and $x_ix_j^{-1}x_ix_j\ldots x_jx_ix_j^{-1}x_i$. Enumerate $\mathcal{D}$ as $\{\vvec_1,\ldots , \vvec_{2^r} \}$. Define 
		$$q \ := \ q_{1,\vvec_1} \ldots q_{1,\vvec_{2^r}} \ldots q_{m,\vvec_1} \ldots q_{m,\vvec_{2^r}}.$$
In $F$,  $$q_{k,\vvec} \ = \    x_1^{\varepsilon_1} \ldots x_r^{\varepsilon_r} \  \rho_k^{-1}  \left(  x_i^2  \rho_k^{-1} x_i^{-2} \right) \cdots  \left( x_i^{2D_{k,\vvec}-2}  \rho_k^{-1} x_i^{-(2D_{k,\vvec}-2)} \right)    \ x_r^{-\varepsilon_r}\ldots x_1^{-\varepsilon_1},$$ a product of $D_{k,\vvec}$ conjugates of $\rho_k^{-1}$ by elements of $2 \Z^r + \vvec$.  
So, multiplying by $q$ corrects each function $f_k$ suitably  so that $hq$ can be represented by a word as in \eqref{eq:product of relators} to which Lemma~\ref{lem:palindromic length when delta sums zero} applies. 

Each of the $r(r+1)2^{r-1}$  battlement words comprising $q$ costs at most $2r+3$ palindromes.  So $q$, and hence also $q^{-1}$, is a product of ${2^{r-1}r(r+1)(2r+3)}$ or fewer palindromes. Since $hq$ can be expressed as the product of   $3r+1$ or fewer palindromes in $F'/F''$, the result follows.
\end{proof}

\begin{proof}[Proof of Theorem~\ref{thm:main}]
Combine Lemmas~\ref{lem:reduce} and \ref{lem:palindromic length in F'/F''}.
\end{proof}

\section{Finite palindromic width of solvable groups satisfying max-n}\label{sec:max-n}

The normal closure  $\langle \! \langle X \rangle \! \rangle$  of a subset $X$ of a group $G$ is the smallest normal subgroup of $G$ containing $X$.  
So $G$ satisfies the \emph{maximal condition on normal subgroups} (or \emph{max-n}) if for every   $N \trianglelefteq G$, there exists a finite  $X \subseteq G$ such that $N = \langle \! \langle X \rangle \! \rangle$. 


\begin{proof}[Proof of Theorem~\ref{thm:max n}]
Let $G$ be a solvable group of derived length $d$ satisfying max-n with finite generating set $A$. Suppose that the $d$--th derived subgroup of $G$ is $\langle \! \langle A_d \rangle \! \rangle$ for some finite $A_d \subseteq G$.  Extend $A$ to the possibly larger, but still finite, generating set $B = A \cup A_1 \cup  \cdots \cup A_{d-1}$ of $G$. The following result  gives   an expression for an element of the derived subgroup $G'$ of $G$.

\begin{lemma}[Akhavan-Malayeri \cite{Akha06}]
There exists $K>0$, depending on the size of $B$, such that any element of $G'$ can be expressed as the product of $K$ or fewer commutators of the form $[g,b]$, or their conjugates, where $b \in B$.
\end{lemma}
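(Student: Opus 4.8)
The plan is to induct on the derived length $d$ of $G$, the engine being the commutator identities
$$[g,hk] \ = \ [g,h]\cdot h[g,k]h^{-1}, \qquad [gh,k]\ =\ g[h,k]g^{-1}\cdot[g,k].$$
Since $B$ generates $G$, repeatedly applying the first identity (together with $[g,b^{-1}]=b^{-1}[g,b]^{-1}b$) rewrites any commutator $[g,h]$, where $h=b_1^{\varepsilon_1}\cdots b_\ell^{\varepsilon_\ell}$ is a word in $B\cup B^{-1}$, as a product of $\ell$ conjugates of the elements $[g,b_i]^{\pm1}$; the second identity does the same for the first argument. I shall call the conjugates of $[g,b]^{\pm1}$ with $b\in B$ the \emph{admissible} factors; this family is exactly the one allowed by the lemma, and it is closed under inverses and conjugation because $[g,b]^{-1}=[\,{}^{b}g,\,b^{-1}]$. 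The identities thus show that $G'$ is generated by admissible factors, but they bound the number of factors only by the word length of $h$; the entire difficulty is to replace this length-dependent count by a bound depending on $|B|$ alone, and this is where solvability and max-n enter.

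For the inductive step (the base case $d\le1$ being trivial, as then $G'=1$), let $N=G^{(d-1)}$ be the last nontrivial term of the derived series. It is abelian, normal in $G$, and contained in $G'$. The quotient $\overline G=G/N$ is solvable of derived length $d-1$, again satisfies max-n, and is generated by the image of $B$, whose relevant subsets normally generate the derived subgroups of $\overline G$; so the inductive hypothesis applies and yields $K'=K'(|B|)$ such that every element of $\overline G'=G'/N$ is a product of at most $K'$ admissible factors of $\overline G$. Given $w\in G'$, lift such an expression for $\overline w$ factor-by-factor to $G$: this produces a product $P$ of at most $K'$ admissible factors of $G$ with $\overline P=\overline w$, whence $z:=P^{-1}w$ lies in $G'\cap N=N$. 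So $w=Pz$, and everything reduces to writing an arbitrary element $z$ of the bottom abelian layer $N$ as a bounded product of admissible factors.

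This last reduction is the crux and is where max-n does its work. As $N$ is abelian, the conjugation action of $G$ on $N$ factors through $\overline G$, making $N$ a module over $\Z[\overline G]$; its $\Z[\overline G]$-submodules are precisely the $G$-invariant subgroups of $N$, which are normal in $G$, so max-n forces the ascending chain condition on them and hence $N$ is finitely generated as a $\Z[\overline G]$-module. A generating set is provided by $A_{d-1}\subseteq B$, each of whose (fixed, finitely many) elements is a product of boundedly many admissible factors after the commutator calculus above. The main obstacle is that finite module generation is \emph{not} enough: expressing $z$ as $\sum_i\lambda_i a_i$ with $\lambda_i\in\Z[\overline G]$ realises it as a product of conjugates of the $a_i$, but the number of conjugates grows with the supports of the $\lambda_i$ and is a priori unbounded. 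One must upgrade finite generation to a genuine bounded-width statement for the abelian layer --- exactly the role played by the skew-symmetric-function machinery of Section~\ref{skew-symmetric functions section} in the metabelian case, and in general the content of Stroud's theorem \cite{Stro66} on which Akhavan-Malayeri's argument \cite{Akha06} rests. Granting it, $z$ is a bounded product of admissible factors, the induction closes, and $K$ depends only on $|B|$.
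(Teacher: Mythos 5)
Your commutator calculus is correct, and the reduction architecture is sound: the identities $[g,hk]=[g,h]\cdot h[g,k]h^{-1}$ and $[gh,k]=g[h,k]g^{-1}\cdot[g,k]$ do what you claim; inducting on derived length, lifting an expression factor-by-factor from $\overline{G}=G/N$ with $N=G^{(d-1)}$, and reducing to the bottom abelian layer, which max-n makes a finitely generated $\Z[\overline{G}]$-module normally generated by $A_{d-1}$, is indeed the shape of Akhavan-Malayeri's actual argument. (One cosmetic slip: $[g,b]^{-1}=[{}^{b}g,b^{-1}]$ produces $b^{-1}$ in the second slot, which need not lie in $B$; closure of the admissible factors under inversion follows instead from $[g,b]^{-1}={}^{g}[g^{-1},b]$. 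Also, your constants depend on the chosen expressions for the finitely many elements of $A_{d-1}$, not literally on $|B|$ alone, though that looseness is secondary.)

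The genuine gap is that at the decisive moment you write ``granting it'' and invoke Stroud's theorem to convert finite module generation of $N$ into a bounded-width statement. That conversion is the entire content of the lemma. Everything preceding it is standard rewriting whose factor count grows with word length, and, as you yourself observe, the lemma is precisely the assertion that this count can be replaced by a constant; assuming the bounded-width theorem for the module layer is assuming the result in the only form in which it is hard. Note that the paper does not prove this lemma either --- it imports it from \cite{Akha06}, whose proof rests on exactly the Stroud-type machinery you cite --- and the paper's own substitute for Stroud, the skew-symmetric function lemmas of Section~\ref{skew-symmetric functions section}, works only when the acting group is free abelian (the metabelian case $d=2$); indeed Section~\ref{Open questions} explicitly flags that extending Lemma~\ref{lem:skew symmetric functions half} to functions on a nonabelian group is open. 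So to close your induction at derived length $d>2$ you would need to prove, not cite, a statement of the form: in a max-n group, every element of an abelian normal subgroup normally generated by a finite set $A_{d-1}$ is a product of boundedly many conjugates of elements of $A_{d-1}^{\pm 1}$. No argument for this appears in the proposal, so what you have is a correct reduction of the lemma to Stroud's theorem rather than a proof of it.
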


The following two observations can both be found in \cite[Lemma 2.5]{BG13}.
 Each commutator $[g,b]$ is the product of three palindromes, namely
		$$gbg^{-1}b^{-1} = ( g b \overline{g} )(\overline{g}^{-1} g ^{-1}) (b^{-1}).$$
Conjugation  increases palindromic length by at most 1. Indeed, if $g=g_1\ldots g_{2k}$, where each $g_i$ is a palindrome and $g_{2k}$ is possibly the empty word, then, for $h \in G$,
		$$hgh^{-1} = (hg_1\overline{h})(\overline{h}^{-1} g_2 h^{-1} ) (h g_3 \overline{h} ) \ldots ( \overline{h}^{-1} g_{2k} h^{-1} ).$$

So, every element of $G'$ may be written as the product of at most $4K$ palindromes. Finally, $G/G'$ is a finitely generated abelian group, so has  palindromic width equal to the size of a minimal generating set. So $\PalWidth(G,B)$ is finite.  
\end{proof}

\section{Open questions} \label{Open questions}

Quantitative results concerning the palindromic width of free nilpotent groups with respect to particular generating sets have recently been established \cite{BG13}.  However the relationship between palindromic width and the choice of finite generating set remains unclear.  In particular:

\begin{question}
Is there  a group $G$ with finite generating sets $X$ and $Y$ such that $\PalWidth(G, X)$ is finite, but $\PalWidth(G, Y)$ is infinite?  
\end{question}

A difficultly here may be a shortage of known obstructions to palindromic width being finite.  The quasi-morphism approach in  \cite{BST05, BT06} does not appear to transfer readily to other groups.

\begin{question}
Do finitely generated solvable groups of higher derived length have finite palindromic width with respect to some (or all) finite generating sets? 
\end{question}

The methods used in this paper for proving Theorem~\ref{thm:main} have the potential to be applied to a larger class of finitely generated groups of higher derived length. In particular, one may consider generalising Lemma~\ref{lem:skew symmetric functions half} to functions $f:G \to R$ where $G$ is not abelian. For example, taking $G$ to be polycyclic seems to be a suitable area to experiment. However, if the factor groups of the derived series of $G$ include infinite-rank abelian groups then it is not clear whether this will be possible.

 Consider  a group $G$ with finite commutator width.  If, with respect to some finite generating set $A$, every commutator has finite palindromic length, then $\PW(G, A) < \infty$. After all $G/G'$ is a finitely generated abelian group, and so modulo $G'$ every element of $G$ has palindromic with at most $|A|$.  This  approach (but specialized to particular commutators) is the basis of  Bardakov \& Gongopadhyay's proof of Theorem~\ref{thm:main} and our proof of Theorem~\ref{thm:max n}.   It motivates:

\begin{question}[Bardakov \& Gongopadhyay \cite{BG13}]
Does a group $G$ have finite palindromic width with respect to some finite generating set  precisely when it has finite commutator width? 
\end{question}

Precise values of $\PW(G, A)$ appear generally elusive.  In the context of this paper an instance one might pursue is:

\begin{question}
What is the palindromic width of the free metabelian group $F/F''$ of rank $r$ with respect to its standard set of $r$ generators?
\end{question}

Finally we ask:

\begin{question}
For which  normal subgroups $N$ of a finite-rank free group $F$ does  $F/N$ having finite palindromic width imply the same of $F/N'$? 
\end{question}

In Section~\ref{metabelian groups section} we answered this affirmatively when $N=F'$. The elements of $F/N'$ can be described as flows on a Cayley graph of $F/N$ \cite{DLS93}. If these flows are suitably symmetric, then they determine a palindromic element of $F/N'$.

\bibliography{bibliography_pal}{}
\bibliographystyle{plain}

\small{ 
\noindent  \textsc{Timothy R.\ Riley} \rule{0mm}{6mm} \\
Department of Mathematics,
Cornell University, 310 Malott Hall, Ithaca, NY 14850, USA \\ \texttt{tim.riley@math.cornell.edu}, \
{http://www.math.cornell.edu/$\sim$riley/}}

\small{ 
\noindent  \textsc{Andrew W.\ Sale} \rule{0mm}{6mm} \\
Department of Mathematics,
Vanderbilt University, 1326 Stephenson Center, Nashville, TN 37240, USA\\ \texttt{andrew.sale@some.oxon.org}, \
{http://perso.univ-rennes1.fr/andrew.sale/}}

\end{document}